\documentclass[11pt]{article}

\usepackage[english]{babel}
\usepackage[a4paper,top=2cm,bottom=3cm,left=2cm,right=2cm,marginparwidth=1.75cm]{geometry}
\setlength {\marginparwidth }{2cm}
\usepackage[utf8]{inputenc}

\usepackage[colorlinks=true, allcolors=blue]{hyperref}
\usepackage{amsmath, amsthm, amssymb, mathtools}
\usepackage{graphicx}
\usepackage{todonotes}
\usepackage{cite}

\usepackage{xcolor, colortbl}
\definecolor{Gray}{gray}{0.85}

\def\Q{\mathbb{Q}}
\def\Z{\mathbb{Z}}
\def\N{\mathbb{N}}
\def\R{\mathbb{R}}
\def\C{\mathbb{C}}

\def\B{\mathcal{B}}
\def\BB{\boldsymbol{\B}}

\DeclareMathOperator{\Fin}{Fin}

\newcommand{\floor}[1]{\lfloor #1 \rfloor}

\makeatletter
\newcommand*\bigcdot{\mathpalette\bigcdot@{.5}}
\newcommand*\bigcdot@[2]{\mathbin{\vcenter{\hbox{\scalebox{#2}{$\m@th#1\bullet$}}}}}
\makeatother

\theoremstyle{definition}
\newtheorem{definition}{Definition}

\newtheorem{remark}[definition]{Remark}
\newtheorem{example}[definition]{Example}

\theoremstyle{plain}
\newtheorem{theorem}[definition]{Theorem}
\newtheorem{proposition}[definition]{Proposition}
\newtheorem{lemma}[definition]{Lemma}

\newtheorem{theoremcisla}{Theorem}

\title{Periodicity and pure periodicity in alternate base systems}
\author{Zuzana Masáková and Edita Pelantová\\[2mm] 
Department of Mathematics FNSPE\\
Czech Technical University in Prague\\
zuzana.masakova@fjfi.cvut.cz,
 edita.pelantova@fjfi.cvut.cz
}

\begin{document}
\maketitle

\begin{abstract}
We study the Cantor real base numeration system which is a common generalization of two positional systems, namely the Cantor system with a sequence of integer bases and  the Rényi system with one real base.
We focus on the so-called alternate base 
$\BB$  given by a purely periodic sequence of real numbers greater than 1.
We answer an open question of Charlier et al.\ on the set of numbers with eventually periodic $\BB$-expansions.
We also investigate for which bases  all sufficiently small rationals have a purely periodic $\BB$-expansion. 
\end{abstract}

 %Alternate base  $\BB$ is given by a $p$-tuple $(\beta_1, \beta_2, \ldots, \beta_p)$ of real numbers greater than 1. We investigate in which cases all rational numbers $\frac{p}{q}$ in the interval $(0,1)$ have an eventually periodic $\BB$-expansion. We show that this property forces the product $\delta = \beta_1\beta_2 \cdots \beta_p$ to be a Pisot or a Salem number. Analogic conclusion was earlier derived by Charlier, Cisternino and Kreczman, under a stronger requirement that $\frac{p}{q}$ has an eventually periodic expansion in every alternate base obtained by a cyclic shift of the original $p$-tuple. In the second part of our paper we examine under which circumstances there exists a $\gamma >0$ such that  every rational number in the interval $(0,\gamma)$ has a purely periodic $\BB$-expansion. We show that a necessary condition for this phenomenon is that  $\delta$ is a Pisot or a Salem unit. We also provide a sufficient condition. We thus generalize the results known for the Rényi  numeration system, i.e. for the case when $p=1$. At the end, we present a class of alternate bases with $p=2$, for which $\gamma$ can be chosen to be $1$.  
\noindent

%\noindent
%{keywords: Cantor real base; Finiteness property; Pisot unit; periodicity}

%%%%%%%%%%%%%%%%%%%%%%%%%%%%%%%%%%%%%%%%%%%%%%%%%%%%
\section{Introduction}\label{sec:intro}

%The possibility to represent real numbers as a series
%$$
%x=\sum_{k=1}^{+\infty} \frac{x_k}{\prod_{i=1}^k \beta_i} 
%$$
%with given sequence of bases $(\beta_i)_{i\geq 1}$ and integer digits $x_i$ was 

Cantor real base systems were first studied by Caalim and Demglio in~\cite{CD20} and independently by Charlier and Cisternino in~\cite{CC21} as a generalisation of Rényi $\beta$-expansions~\cite{Renyi57}. While in the Rényi numeration system, one uses for representation of numbers a sum of powers of a single base $\beta>1$, here we 
consider a sequence of real bases $\BB=(\beta_i)_{i\geq 1}$, $\beta_i>1$. A real number $x\in[0,1)$ can be represented by an infinite series 
$$
x=\sum_{k=1}^{+\infty} \frac{x_k}{\prod_{i=1}^k \beta_i}, \qquad x_k\in\N.
$$
Note that the possibility to represent real numbers in this form was already mentioned in~\cite{Galambos}. Some of the properties of such representations are direct analogies of those proved for $\beta$-expansions, others appear to be much more difficult.

Papers~\cite{CD20} and~\cite{CC21} concentrated on characterizing the representations which are produced by the greedy algorithm, the so-called $\BB$-expansions. The characterization is given in terms of a set of lexicographic conditions, which are to be compared  to those obtained by Parry~\cite{P60} for Rényi $\beta$-expansions.
Charlier and Cisternino~\cite{CC21} then focused on the sequences of bases that are purely periodic with period of length $p$. They called such a base $\BB$ an alternate base, and write $\BB=(\beta_1,\dots,\beta_p)$. They then characterized alternate bases providing sofic systems. Algebraic description of sofic alternate bases is given in~\cite{CCMP23}. Note that for $p=1$, one obtains the case of Rényi $\beta$-expansions where soficness was described by Bertrand-Mathis~\cite{Bertrand-Mathis}.

From the arithmetical point of view, one is interested which numbers have $\BB$-expansions with finite, purely periodic or eventually periodic $\BB$-expansions.
The so-called finiteness property (F), i.e. the fact that addition and subtraction of finite $\BB$-expansions yields again a finite $\BB$-expansions, was studied in~\cite{MPS23-LNCS}, providing some necessary and some sufficient conditions for finiteness, a counterpart of the results of Frougny and Solomyak~\cite{FS92} and others. A class of bases with (F) property was also given.

The purpose of this article is to study the set ${\rm Per}(\BB)$ of numbers in the unit interval $[0,1)$ with periodic expansions in alternate base numeration systems. For $p=1$, Schmidt~\cite{S80} has shown that if ${\rm Per}(\beta)$ contains all rational numbers of $[0,1)$, then $\beta$ is a Pisot number or a Salem number. 
%Recall that a Pisot number is an algebraic integer $>1$ with all conjugates in the interior of the unit circle. A Salem number is an algebraic integer $>1$ with all conjugates in the unit circle with at least one of them lying on the boundary.
As a partial converse, Schmidt~\cite{S80} has also proved that if $\beta$ is a Pisot number, then ${\rm Per}(\beta)=\Q(\beta)\cap[0,1)$, where $\Q(\beta)$ denotes the algebraic extension of rational numbers by $\beta$.

Charlier et al.~\cite{CCK23} have shown the analogy of the above result of Schmidt for alternate base numeration systems.

\begin{theorem}[\cite{CCK23}]\label{thm:PKreczman}
Let $\BB=(\beta_1,\dots,\beta_{p})$ be an alternate base and set 
$\delta=\prod_{i=1}^p \beta_i$.
\begin{itemize}
\item[(1)] If $\Q\cap[0,1) \subseteq \bigcap_{i=1}^p {\rm Per}(\BB^{(i)})$,
then $\delta$ is either a Pisot  or a Salem number and $\beta_1, \dots, \beta_{p} \in \Q(\delta)$.
\item[(2)] If $\delta$ is a Pisot number and  $\beta_1, \dots, \beta_{p} \in \Q(\delta)$, then ${\rm Per}(\BB) = \Q(\delta) \cap [0,1)$.
\end{itemize}
\end{theorem}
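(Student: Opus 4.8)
The plan is to follow Schmidt's treatment~\cite{S80} of the case $p=1$, adapted to the period-$p$ structure. For $x\in[0,1)$ write $(d_k)_{k\ge1}$ for its $\BB$-expansion and $(t_k)_{k\ge0}$ for the associated tails: $t_0=x$ and $t_k=\beta_{\langle k\rangle}t_{k-1}-d_k$, where $\langle k\rangle\in\{1,\dots,p\}$ is congruent to $k$ modulo $p$, so that $t_k\in[0,1)$ and $0\le d_k<\beta_{\langle k\rangle}$; in particular the digits take only finitely many values. I will use the elementary facts that the $\BB^{(i)}$-expansion of $t_{i-1}$ is the shift by $i-1$ of the $\BB$-expansion of $x$, and that, since $t_k$ together with the residue $k\bmod p$ determines $t_{k+1}$, the $\BB$-expansion of $x$ is eventually periodic if and only if the set $\{t_k:k\ge0\}$ is finite.

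For the inclusion ${\rm Per}(\BB)\subseteq\Q(\delta)\cap[0,1)$ in part~(2): if the expansion of $x$ is eventually periodic, choose a preperiod $M$ and a period $L$ that is a multiple of $p$, so that $t_{M+1}=t_{M+1+L}$. Running the $L$ transformations that send $t_{M+1}$ back to itself amounts to multiplying by $\beta_{\langle M+2\rangle}\cdots\beta_{\langle M+1+L\rangle}=\delta^{L/p}$ and subtracting a fixed element of $\Q(\beta_1,\dots,\beta_p)$; since $\delta^{L/p}\neq1$, this forces $t_{M+1}\in\Q(\beta_1,\dots,\beta_p)$, and then $x$ is an explicit $\Q(\beta_1,\dots,\beta_p)$-linear combination of $t_{M+1}$ and the finitely many digits $d_1,\dots,d_{M+1}$. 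Since $\beta_i\in\Q(\delta)$ by hypothesis, $\Q(\beta_1,\dots,\beta_p)=\Q(\delta)$, whence $x\in\Q(\delta)\cap[0,1)$.

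The substantial direction is ${\rm Per}(\BB)\supseteq\Q(\delta)\cap[0,1)$. Let $x\in\Q(\delta)\cap[0,1)$, put $K=\Q(\delta)$, and let $\sigma_1=\mathrm{id},\sigma_2,\dots,\sigma_d$ be the embeddings of $K$ into $\C$, so $|\sigma_j(\delta)|<1$ for $j\ge2$ because $\delta$ is Pisot. Fix $r\in\{0,\dots,p-1\}$ and consider $u_n=t_{np+r}$. Unrolling $p$ consecutive steps gives $u_n=\delta\,u_{n-1}-c_n$, where $c_n$ is an $\N$-combination, with fixed coefficients in $K$, of the bounded digits $d_{(n-1)p+r+1},\dots,d_{(n-1)p+r+p}$; hence the $c_n$ lie in a finite subset of $K$, and $u_0=t_r\in K$. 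Now two uniform estimates hold. First, since $\delta$ is an algebraic integer one may fix a single $D\in\N$ (depending only on $x$ and $\BB$) with $Du_0\in\mathcal{O}_K$ and $Dc_n\in\mathcal{O}_K$ for all $n$ and all $r$; an induction on $u_n=\delta u_{n-1}-c_n$ then gives $t_k\in\frac1D\mathcal{O}_K$ for every $k$. Second, for $j\ge2$ one has $|\sigma_j(u_n)|\le|\sigma_j(\delta)|\,|\sigma_j(u_{n-1})|+|\sigma_j(c_n)|$, and since $|\sigma_j(\delta)|<1$ and the $\sigma_j(c_n)$ are finitely many, a geometric-series bound makes $|\sigma_j(t_k)|$ bounded independently of $k$; for $j=1$ we have $t_k\in[0,1)$. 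Therefore all $t_k$ lie in the intersection of the lattice $\frac1D\mathcal{O}_K$, embedded in $\R^{r_1}\times\C^{r_2}$ via all conjugates, with a fixed bounded region, so there are finitely many of them, and $x\in{\rm Per}(\BB)$. The genuinely new point compared with $p=1$ is that an individual $\beta_i$ need not be an algebraic integer, so one must advance a whole period at a time: it is the return map, multiplication by $\delta\in\mathcal{O}_K$, that keeps denominators bounded.

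Part~(1) adapts Schmidt's necessity argument. Applying the elementary direction above to the rationals $1/q$, now in all the shifted bases $\BB^{(i)}$, one first deduces that $\delta$ is an algebraic integer and that $\beta_1,\dots,\beta_p\in\Q(\delta)$ — the hypothesis for every shift being exactly what lets one transfer periodicity between the $p$ residue classes (through the shift relation between the $\BB^{(i)}$- and $\BB$-expansions) and thereby locate each $\beta_i$ inside $\Q(\delta)$. It then remains to exclude a conjugate of $\delta$ of modulus $>1$: given such a $\gamma$, one manufactures a rational $x$ whose tail sequence — finite by hypothesis — would have an unbounded image under the embedding sending $\delta$ to $\gamma$, because one period of the dynamics acts on that side as multiplication by $\gamma$ plus a bounded correction. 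Hence every conjugate of $\delta$ has modulus at most $1$, i.e.\ $\delta$ is a Pisot or a Salem number; conjugates of modulus exactly $1$ cannot be ruled out, which is precisely the Pisot/Salem dichotomy already present in Schmidt's theorem. I expect this step — together with the bookkeeping that places each $\beta_i$ in $\Q(\delta)$ — to be the main obstacle, since both require carefully exploiting the interplay between the period-$p$ structure and the algebraic constraints, whereas part~(2) reduces, once one thinks a full period at a time, to a clean lattice-finiteness argument.
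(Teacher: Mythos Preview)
This theorem is quoted from~\cite{CCK23} and is not proved in the present paper, so there is no proof here to compare against line by line. The paper's own contribution is the strengthening Theorem~A of part~(1), proved in Section~\ref{sec:ThmA}; part~(2) is not reproved. Your argument for part~(2) is the standard Schmidt-type lattice-finiteness argument and is correct.

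Your sketch of part~(1), however, has two genuine gaps. First, the sentence ``applying the elementary direction above to the rationals $1/q$ \dots\ one first deduces that $\delta$ is an algebraic integer and that $\beta_1,\dots,\beta_p\in\Q(\delta)$'' hides all of the content: the elementary direction only tells you that a rational with periodic expansion lies in $\Q(\beta_1,\dots,\beta_p)$, which is vacuous here, and getting each $\beta_i$ into $\Q(\delta)$ requires producing and solving enough independent algebraic relations. Second, and more seriously, the exclusion of a conjugate $\eta$ of $\delta$ with $|\eta|>1$ does not follow from a \emph{single} rational $x$ as you suggest. If the tail sequence $(t_k)$ is eventually periodic then $(\psi(t_k))$ is the $\psi$-image of a finite set, hence automatically bounded; the recursion $\psi(t_{k+1})=\psi(\beta_{\langle k+1\rangle})\psi(t_k)-d_{k+1}$ is perfectly compatible with a periodic orbit even when $|\eta|>1$, so no contradiction arises.

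The paper's route to the stronger Theorem~A is quite different from your dynamical sketch and shows what is actually needed. For $\beta_i\in\Q(\delta)$ and algebraicity (Lemmas~\ref{l:omatici}--\ref{l:deltaAlgebraic}) one chooses $p$ rationals with prescribed $\BB$-expansion prefixes $0^{pn+j-1}10^{pm-j}$, packages the resulting identities~\eqref{eq:expevper} into a $p\times p$ matrix $M(X)$ over $\Z[X]$ with $M(\delta)\vec v=0$, and controls degrees so that $\det M(X)\not\equiv 0$ and the rank of $M(\delta)$ is $p-1$. Integrality (Lemma~\ref{l:deltaAlgebraic2}) comes from running this with two sets of \emph{prime} denominators and applying B\'ezout to the leading coefficients. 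For the Pisot/Salem conclusion (Lemma~\ref{l:deltaAlgebraic3}) one uses an infinite \emph{family} $x^{(m)}$ of rationals with prefixes $0^{pn-1}10^{pm}$: writing $x^{(m)}=\delta^{-n}+O(\delta^{-n-m})$ and applying $\psi$ to the finite algebraic expression for $x^{(m)}$ gives $x^{(m)}=\eta^{-n}+O(|\eta|^{-n-m})$, and letting $m\to\infty$ forces $\delta^{-n}=\eta^{-n}$, a contradiction. The family, not a single $x$, is what makes the argument work.
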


Note that for the implication (1), the authors require that rational numbers have eventually periodic expansion not only in the alternate base $\BB=(\beta_1,\dots,\beta_p)$, but also in all its shifts, $\BB^{(i)}=(\beta_i,\beta_{i+1}, \dots,\beta_{i+p})$, $i=1,\dots,p$, where the indices are counted modulo $p$. The authors state as a question, whether one can conclude the same requiring only 
$\Q\cap[0,1) \subseteq {\rm Per}(\BB)$. In this paper we answer this question in the affirmative. In Section~\ref{sec:ThmA}, we prove with the help of several auxiliary statements the following theorem. 

\begin{theoremcisla}\label{thm:SchmidtLepsi}
    Let $\BB=(\beta_1,\dots,\beta_{p})$ be an alternate base and set 
$\delta=\prod_{i=1}^p \beta_i$. If $\Q\cap[0,1) \subseteq {\rm Per}(\BB)$,
then $\delta$ is either a Pisot  or a Salem number and $\beta_1, \dots, \beta_{p} \in \Q(\delta)$.
\end{theoremcisla}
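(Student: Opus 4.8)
The plan is to reduce the hypothesis $\Q\cap[0,1)\subseteq{\rm Per}(\BB)$ to the stronger hypothesis $\Q\cap[0,1)\subseteq\bigcap_{i=1}^p{\rm Per}(\BB^{(i)})$ of Theorem~\ref{thm:PKreczman}(1), and then quote that result. So the whole task is: \emph{if every rational in $[0,1)$ has an eventually periodic $\BB$-expansion, then every rational in $[0,1)$ also has an eventually periodic $\BB^{(i)}$-expansion for each shift $i$.} The key observation is that the $\BB^{(i)}$-expansion of a point $x$ is closely tied to a \emph{tail} of the $\BB$-expansion of a related point: if $x\in[0,1)$ and $d_\BB(x)=x_1x_2x_3\cdots$ is its $\BB$-expansion, then the sequence $x_{i}x_{i+1}x_{i+2}\cdots$ is the $\BB^{(i)}$-expansion of the real number $y_i:=\sum_{k\ge i} x_k/\prod_{j=i}^{k}\beta_j\in[0,1)$, because applying the greedy algorithm with bases $\beta_1,\dots,\beta_{i-1}$ first to $x$ leaves precisely the greedy expansion of $y_i$ in base $\BB^{(i)}$. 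Hence eventual periodicity of $d_\BB(x)$ forces eventual periodicity of $d_{\BB^{(i)}}(y_i)$; the problem is that $y_i$ need not be rational, and not every rational in $[0,1)$ arises as such a $y_i$.

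To fix this, I would argue as follows. Fix a shift $i$ and a rational $q\in[0,1)$; I want $d_{\BB^{(i)}}(q)$ eventually periodic. The idea is to \emph{prepend} $i-1$ well-chosen integer digits to realize $q$ as the tail $y_i$ of the $\BB$-expansion of some rational $x$. Concretely, I would choose a rational $r\in[0,1)$ whose $\BB$-expansion begins with a prescribed finite word $w=w_1\cdots w_{i-1}$ (admissible for $\BB$) and whose ``remainder after $i-1$ steps'' equals $q$; equivalently, set
$$ x \;=\; \frac{w_1}{\beta_1}+\frac{w_2}{\beta_1\beta_2}+\cdots+\frac{w_{i-1}}{\beta_1\cdots\beta_{i-1}}+\frac{q}{\beta_1\cdots\beta_{i-1}}, $$
and show that for a suitable admissible choice of $w$ (which exists because the greedy map is surjective onto $[0,1)$ at each step, so one can steer the orbit), the greedy algorithm for $x$ in base $\BB$ produces exactly $w$ in its first $i-1$ digits and then the $\BB^{(i)}$-expansion of $q$. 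Since $w_j\in\N$, $\beta_j\in$ (the relevant number field), and $q\in\Q$, the number $x$ lies in $\Q(\beta_1,\dots,\beta_p)$; but at this point I do not yet know the $\beta_j$ are rational, so I cannot directly say $x\in\Q$.

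The cleaner route, and the one I expect to be the main technical obstacle, is therefore to avoid needing $x\in\Q$ and instead work directly with the ingredients of the proof of Theorem~\ref{thm:PKreczman}(1). That proof surely proceeds by: taking a rational $q$, using eventual periodicity of its expansion(s) to write $q$ (or $\delta^n q$) as a fixed point of an affine map with coefficients in $\Z[1/\delta]$ coming from the periodic pattern, deducing an algebraic relation that pins down $\delta$ as an algebraic integer all of whose conjugates other than itself are $\le 1$ in modulus (Pisot or Salem), and separately extracting $\beta_i\in\Q(\delta)$ from the digit data. The point where shifts $\BB^{(i)}$ enter is exactly in isolating each individual $\beta_i$: one needs periodic expansions ``starting at position $i$''. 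So the real content is to show that eventual periodicity of $d_\BB(q)$ for all rational $q$ already yields, for each $i$, \emph{enough} eventually periodic base-$\BB^{(i)}$ expansions of field elements to run that extraction — and I would get these from the tails $y_i$ above, noting that as $q$ ranges over $\Q\cap[0,1)$ the resulting $y_i$ range over a set that is dense and closed under the base-$\BB^{(i)}$ shift, which should be all that the algebraic argument actually uses. I expect the bookkeeping of which rationals are needed, and checking the tails land in the right field, to be the crux; the Pisot/Salem dichotomy for $\delta$ itself should follow as in \cite{CCK23} once periodicity of expansions of elements of $\Q(\delta)$ (not merely $\Q$) is in hand via these tails together with closure of $\Q(\delta)$ under the relevant operations.
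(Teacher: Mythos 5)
There is a genuine gap. Your plan is to upgrade the hypothesis to $\Q\cap[0,1)\subseteq\bigcap_i{\rm Per}(\BB^{(i)})$ and then invoke Theorem~\ref{thm:PKreczman}(1), but neither of your two routes to that upgrade actually goes through. The first route (prepend digits $w_1\cdots w_{i-1}$ to realize $q$ as the tail $y_i$ of the $\BB$-expansion of some $x$) fails at the step you yourself flag: the resulting $x$ lies in $\frac{w_1}{\beta_1}+\cdots+\frac{q}{\beta_1\cdots\beta_{i-1}}+\cdots$, and since the $\beta_j$ are a priori arbitrary real numbers $>1$, there is no reason for $x$ to be rational, so the hypothesis $\Q\cap[0,1)\subseteq{\rm Per}(\BB)$ gives no information about $d_\BB(x)$. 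The second route (``work directly with the ingredients of the proof of Theorem~\ref{thm:PKreczman}(1), feeding in the tails $y_i$ instead of rationals'') is left entirely speculative: you do not verify that the tails $y_i$ land in a field you control, nor that density plus shift-invariance is really all the algebraic extraction uses — and in fact, before knowing $\beta_j\in\Q(\delta)$, the tails $y_i=\sum_{k\geq i}x_k/\prod_{j=i}^k\beta_j$ live in $\Q(\beta_1,\dots,\beta_p)$, which is not yet known to be a number field at all, so the extraction cannot be run on them. Your sentence ``I expect the bookkeeping \dots to be the crux'' is an accurate self-assessment: the crux is precisely what is missing.

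The paper avoids this entirely by \emph{not} reducing to the shifts. It rewrites an eventually periodic $\BB$-expansion of $x$ as an identity $x\,\delta^r(\delta^s-1)=\bigl(h_1(\delta),\dots,h_p(\delta)\bigr)\vec v$ with $\vec v=\bigl(\prod_{i\geq 2}\beta_i,\dots,\beta_p,1\bigr)^T$ and integer polynomials $h_i$, then selects $p$ rationals $x^{(1)},\dots,x^{(p)}$ whose $\BB$-expansions have prescribed prefixes $0^{pn+j-1}10^{pm-j}$ (staggered $1$'s). Stacking the $p$ identities gives a matrix equation $M(\delta)\vec v=\vec 0$ with $M(X)$ over $\Z[X]$; singularity of $M(\delta)$ yields a nonzero integer polynomial annihilating $\delta$, and choosing $m$ large makes a $(p-1)\times(p-1)$ submatrix diagonally dominant, forcing $\operatorname{rank} M(\delta)=p-1$ and hence $\beta_i\in\Q(\delta)$. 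Two further choices of rationals (with distinct prime denominators, via the prime number theorem and B\'ezout) upgrade ``algebraic'' to ``algebraic integer'', and a Galois/limit argument rules out conjugates of modulus $>1$. None of this needs periodicity in the shifted bases $\BB^{(i)}$ — the staggered prefixes already extract all $p$ components of $\vec v$ from expansions in $\BB$ alone, which is exactly the point your reduction was trying (and failing) to supply.
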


The second part of our results concerns rational numbers with purely periodic $\BB$-expansions. We will say that an alternate base $\BB$ satisfies pure periodicity property (Property (PP)), if there exists an interval $[0,\gamma)$, $0<\gamma\leq1$, such that every rational in $[0,\gamma)$ has purely periodic $\BB$-expansion. A non-trivial problem is determination of the supremum of all constants $\gamma$ exhibiting Property (PP) in base $\B$. Let us denote it  by $\gamma(\BB)$.

Before stating our results, let us recall what is known for the case when $p=1$. For Rényi $\beta$-expansions, Schmidt has shown that quadratic Pisot units with minimal polynomial $x^2-mx-1$, $m\geq 1$, satisfy (PP), moreover with $\gamma(\beta)=1$. Later, Hama and Imahashi~\cite{HI97} derived that if $\beta$ is a quadratic Pisot unit not of this type (i.e.\ with minimal polynomial $x^2-mx+1$, $m\geq 3$), then no rational number has purely periodic $\beta$-expansion, thus $\beta$ does not possess (PP).

Akiyama~\cite{A98} has put Property (PP) into connection with the finiteness property. In particular, he proved the following. 

\begin{theorem}[\cite{A98}]
Let $\beta>1$ satisfies (PP), then $\beta$ is a Pisot unit. On the other hand, if $\beta>1$ is a Pisot unit with Property (F), then $\beta$ satisfies (PP).
\end{theorem}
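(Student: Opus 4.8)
The plan is to treat the two implications separately. \emph{First implication: (PP) $\Rightarrow$ $\beta$ is a Pisot unit.} Write $M=\lceil\beta\rceil-1$ for the largest admissible digit and assume (PP) with constant $\gamma\in(0,1]$. For every integer $q>1/\gamma$ the rational $1/q$ lies in $[0,\gamma)$, hence has a purely periodic $\beta$-expansion $1/q=0.\overline{v_{1}\cdots v_{t}}$ with $v_{i}\in\{0,\dots,M\}$ not all zero; equivalently,
\begin{equation}\label{eq:star}
\beta^{t}-1=q\bigl(v_{1}\beta^{t-1}+\cdots+v_{t}\bigr).
\end{equation}
From \eqref{eq:star} I would read off that $\beta$ is an algebraic integer, being a root of the monic integer polynomial $X^{t}-q(v_{1}X^{t-1}+\cdots+v_{t})-1$. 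Next I would show $\beta$ is a unit: if not, a prime $p$ divides $\mathrm{N}(\beta)=\pm f(0)$ with $f$ the minimal polynomial of $\beta$, so $f\equiv X\,g\pmod{p}$ with $\deg g=\deg f-1$ and $g\not\equiv 0$, and in the finite ring $R=\mathbb{F}_{p}[X]/(f\bmod p)$ the residue class $\theta$ of $\beta$ satisfies $\theta\,g(\theta)=0$ with $g(\theta)\neq0$; hence $\theta$ is not invertible in $R$, so $\theta^{t}\neq1$ for every $t\geq1$, whereas taking $q$ in \eqref{eq:star} to be a multiple of $p$ and reducing modulo $p$ forces $\theta^{t}=1$ — a contradiction. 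Finally I would exclude that $\beta$ is Salem: its minimal polynomial would then be reciprocal, giving a field automorphism $\sigma$ of $\mathbb{Q}(\beta)$ with $\sigma(\beta)=\beta^{-1}$, and applying $\sigma$ to \eqref{eq:star} (which fixes $1/q$) and clearing denominators yields $v_{1}\beta^{t-1}+\cdots+v_{t}=-(v_{1}\beta+\cdots+v_{t}\beta^{t})$, impossible since the left side is positive and the right side non-positive.

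It then remains to promote ``algebraic integer'' to ``Pisot'', and for this the plan is to invoke Schmidt's theorem quoted above: if every rational of $[0,1)$ has an eventually periodic $\beta$-expansion, then $\beta$ is Pisot or Salem, which together with the previous paragraph makes $\beta$ a Pisot unit. I expect this step — verifying the hypothesis of Schmidt's theorem, equivalently ruling out a conjugate of modulus $>1$ — to be the main obstacle of the proof. One cannot simply propagate periodicity outward from the small rationals provided by (PP): $T_{\beta}$ sends a nonzero rational to an irrational in one step, so the $T_{\beta}$-orbit of a rational never returns to a known periodic point, and for a general $\beta$ the set of eventually periodic numbers need not be stable under multiplication by positive integers. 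Instead one must run Schmidt's quantitative analysis, controlling $|N(\beta')|$ for a hypothetical conjugate $\beta'$ with $|\beta'|>1$ along the expansions of a sequence of rationals tending to $0$ (which (PP) makes available), to derive a contradiction.

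\emph{Second implication: $\beta$ a Pisot unit with (F) $\Rightarrow$ (PP).} Since $\beta$ is Pisot, Schmidt's second result gives $\mathrm{Per}(\beta)=\mathbb{Q}(\beta)\cap[0,1)$, so every rational of $[0,1)$ has at least an eventually periodic expansion; the task is to make the pre-period empty for small rationals. The plan is to use the natural extension $\widehat{T}_{\beta}$ of the $\beta$-transformation. As $\beta$ is a Pisot \emph{unit}, the non-dominant Galois conjugates embed $\mathbb{Z}[\beta]=\mathbb{Z}[\beta^{-1}]$ as a lattice in the conjugate space $E\cong\mathbb{R}^{r-1}\times\mathbb{C}^{s}$, and $\widehat{T}_{\beta}$ is, up to a null set, a bijection of a bounded domain $\mathcal{D}\subseteq[0,1)\times E$ built from the central tile (Rauzy fractal) of $\beta$; moreover a point $x\in\mathbb{Q}(\beta)\cap[0,1)$ has a \emph{purely} periodic expansion if and only if $(x,\varphi(x))\in\mathcal{D}$, where $\varphi(x)$ is the vector of Galois conjugates of $x$ (the characterisation of purely periodic points going back to Bertrand-Mathis and Ito--Rao). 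Then I would invoke Akiyama's description of (F): $\beta$ satisfies (F) if and only if $0$ is an interior point of $\mathcal{D}$, i.e.\ the central tile contains a neighbourhood of the origin. Granting this, fix $\varepsilon>0$ with $[0,\varepsilon)\times B(0,\varepsilon)\subseteq\mathcal{D}$; for a rational $x$ with $0\leq x<\varepsilon$ one has $\varphi(x)=(x,\dots,x)\in B(0,\varepsilon)$, so $(x,\varphi(x))\in\mathcal{D}$ and $x$ is purely periodic, whence $\beta$ satisfies (PP) with $\gamma(\beta)\geq\varepsilon$. In this direction the real work is constructing $\mathcal{D}$ and proving the two cited facts — the description of purely periodic points via $\mathcal{D}$, and the equivalence of (F) with $0$ being interior to $\mathcal{D}$ — both of which rely essentially on $\beta$ being a unit (for a non-unit Pisot number the conjugate picture picks up non-archimedean components and must be recast).
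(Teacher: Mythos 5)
This theorem is quoted by the paper from Akiyama~\cite{A98} and is not itself proved there; what the paper does prove are its alternate-base generalisations, Theorems~\ref{thm:necessary} and~\ref{thm:sufficient}, whose proofs (Sections~\ref{sec:necessary} and~\ref{sec:sufficient}) adapt Akiyama's arguments and specialise at $p=1$ to exactly this statement. Against those, your first implication runs along essentially the same arithmetical lines. The algebraic-integer and unit steps are the $p=1$ content of Lemma~\ref{l:deltaAlgebraic4}: your reduction modulo a prime $p\mid N(\beta)$ in $\mathbb{F}_p[X]/(\bar f)$ is an equivalent packaging of the paper's device of producing an integer polynomial with $\beta$ as root whose constant term is $\equiv -1 \pmod{|N(\beta)|}$ and using that the norm divides that constant term. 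Your Salem exclusion via the reciprocal automorphism $\sigma(\beta)=\beta^{-1}$ is the $p=1$ instance of the ``no positive non-identical conjugate'' condition of Lemma~\ref{l:deltaAlgebraic4}, since a Salem number has the positive conjugate $\beta^{-1}$. And you correctly flag the genuine subtlety that Schmidt's theorem cannot be invoked verbatim (it needs all rationals of $[0,1)$, not merely $[0,\gamma)$); the fix you sketch---re-running the conjugate-embedding estimate along a sequence of rationals inside $[0,\gamma)$ and deriving a contradiction from a hypothetical conjugate $\eta$ with $|\eta|>1$---is precisely what Lemma~\ref{l:deltaAlgebraic3} carries out, with rationals whose expansions begin $0^{n-1}10^{m}$ and $m\to\infty$.

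Your second implication takes a genuinely different route from the paper. Theorem~\ref{thm:sufficient} is proved directly and arithmetically: for a small rational $x$, pick $N$ with $x(\delta^N-1)\in\mathbb{Z}[\delta]$ and all non-identical conjugates small; by (F) this number has a finite expansion; a uniform conjugate lower bound for $\BB$-integers not divisible by $\delta$ (Lemma~\ref{le:pomoc2}, built on Lemma~\ref{le:pomoc1}) forces it into $\N_{\BB}$; and repeating its digit block, padded with enough zeros to preserve admissibility (Remark~\ref{re:hodneNul}), produces the purely periodic expansion of $x$. You instead invoke the natural-extension/central-tile machinery---the Ito--Rao style characterisation of purely periodic expansions via the diagonal of the natural-extension domain, together with the criterion ``(F) iff $0$ is interior to the tile.'' That route is valid and geometrically illuminating for Pisot units, but it is substantially heavier, rests on results that postdate Akiyama's 1998 argument, and (unlike the paper's self-contained arithmetical argument) does not transfer readily to alternate bases, which is the actual point of the paper's generalisation.
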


The question whether validity of (F) is necessary for (PP) has been decided for quadratic bases $\beta$ (as a result of Schmidt~\cite{S80} and Hama and Imahashi~\cite{HI97}) and also for cubic bases $\beta$. This is a result of Adamczewski et al.~\cite{AFSS10} who prove that a cubic base $\beta$ satisfies (PP) if and only if it is a Pisot unit with (F). Moreover, they show that the constant $\gamma(\beta)$ from Property (PP)  is irrational for cubic numbers which are not totally real.

In this paper we study Property (PP) of alternate bases. We show a necessary condition.

\begin{theoremcisla}\label{thm:necessary}
Let $\BB=(\beta_1,\dots,\beta_p)$ be an alternate base with Property (PP). Then $\delta=\prod_{i=1}^p\beta_i$ is a Pisot or a Salem unit and $\beta_i\in\Q(\delta)$ for every $i=1,\dots,p$. Moreover, the vector $(\psi(\beta_1),\dots,\psi(\beta_p))$ is not positive for any non-identical embedding $\psi:\Q(\delta)\to\C$.
\end{theoremcisla}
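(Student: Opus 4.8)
The plan is to derive the three conclusions in stages, leaning on Theorem~\ref{thm:SchmidtLepsi} and the structure theory of $\BB$-expansions. First, since Property (PP) forces all sufficiently small rationals to have purely periodic---hence eventually periodic---$\BB$-expansions, and since eventual periodicity is preserved under multiplication by integers and under the shift (adding a digit changes $x$ to $\beta_1 x - x_1$, mapping rationals to rationals and preserving the ``small rational'' hypothesis up to scaling), I would first upgrade (PP) to the full hypothesis $\Q\cap[0,1)\subseteq{\rm Per}(\BB)$ of Theorem~\ref{thm:SchmidtLepsi}. That theorem then immediately gives that $\delta$ is Pisot or Salem and $\beta_1,\dots,\beta_p\in\Q(\delta)$. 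The upgrade from ``small rationals'' to ``all rationals'': given $r=a/b\in[0,1)$, pick $N$ with $r/N<\gamma$; then $r/N$ is purely periodic, so eventually periodic, and applying the $\BB$-transformation (or rather the appropriate iterate, tracking the cyclic shift of the base) and multiplying by suitable integers along the way shows $r$ is eventually periodic in $\BB$. One must be a little careful because scaling does not commute with the non-linear $\BB$-expansion map, but multiplication by a positive integer $N$ does preserve eventual periodicity of the expansion, which is the standard argument.

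Next I would prove that $\delta$ is a \emph{unit}. Here I would use the finiteness/periodicity arithmetic of $\BB$-expansions together with the representation of periodic expansions as geometric-type series. If $x\in[0,1)$ has purely periodic $\BB$-expansion with period dividing $p$ (one can always pass to a common multiple of the period and $p$, effectively working in the single base $\delta$ with digit strings of length $p$), then $x = (\text{integer combination})/(\delta^m - 1)$ for suitable $m$. Conversely, the hypothesis says every small rational, in particular $1/q$ for all large primes $q$, arises this way; comparing denominators forces $q \mid \delta^m-1$ in the ring $\Z[\delta]$ (or its relevant localization), and running this over infinitely many $q$ forces $\delta$ to be a unit in $\Z[\delta^{\pm1}]\cap\Q(\delta)$, i.e.\ $\delta$ is an algebraic unit. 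This is essentially the alternate-base analogue of Akiyama's argument that (PP) implies $\beta$ is a unit, and I expect it to be the technically heaviest part.

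Finally, the positivity obstruction. Suppose for contradiction that there is a non-identical embedding $\psi:\Q(\delta)\to\C$ with $\psi(\beta_i)>0$ real for all $i$. The idea is to use the ``dual'' or conjugate expansion: a purely periodic $\BB$-expansion of a rational $x$ yields, after applying $\psi$ coordinatewise to the defining geometric series, a convergent series expressing $\psi(x)$ as a legal expansion in the base $(\psi(\beta_1),\dots,\psi(\beta_p))$ --- convergence being exactly where we need $\psi(\beta_i)>1$, or at least $\prod\psi(\beta_i)>1$; more precisely one gets that $\psi(x)$ lies in the interval bounded by the values of the all-digit-$0$ and maximal-digit periodic strings. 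For pure periodicity one needs $\psi(x)$ and $x$ simultaneously in a bounded region for \emph{all} small rationals $x$, but the images $\psi(x)$ of small rationals are dense in $\Q(\delta)$ embedded via $\psi$, hence dense in $\R$ (or in $\C$) --- they cannot all lie in a bounded interval. This contradicts the admissibility constraints forced by purely periodic expansions, which confine the conjugate value to a bounded ``Rauzy-fractal-like'' region. The main obstacle I anticipate is making this last step rigorous: one must identify the correct compact region in which the $\psi$-images of purely-periodic points are trapped (an intersection of half-lines coming from the Parry-type lexicographic conditions of Charlier--Cisternino, one per shift $\BB^{(i)}$), show it is bounded precisely when no such positive embedding exists, and show it is \emph{unbounded} when one does --- using that the orbit of small rationals under the $\BB$-transformation is infinite and its $\psi$-images escape to infinity along the expanding direction created by $\psi(\beta_i)>0$ combined with the digit contributions. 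I would organize this via an explicit conjugate map $\xx\mapsto\sum_k \psi$-scaled digits and a compactness argument, mirroring the $p=1$ case of Akiyama but carried out coordinatewise over the $p$ shifted bases.
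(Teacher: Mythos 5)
There are a couple of genuine gaps in your plan, most seriously in the positivity step.

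For the first part (Pisot/Salem, $\beta_i\in\Q(\delta)$), your detour through ``upgrade (PP) to $\Q\cap[0,1)\subseteq{\rm Per}(\BB)$'' is unnecessary and circular. The paper's Lemmas~\ref{l:deltaAlgebraic}--\ref{l:deltaAlgebraic3}, which are what Theorem~\ref{thm:SchmidtLepsi}'s proof actually invokes, only require that rationals in \emph{some} interval $[0,\gamma)$ have eventually periodic $\BB$-expansions; Property (PP) gives exactly this. Your proposed upgrade step relies on the claim that multiplying a number with eventually periodic $\BB$-expansion by a positive integer preserves eventual periodicity, but that is the conclusion of Schmidt-type theorems for Pisot bases, not something you can assume while trying to prove $\delta$ is Pisot.

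For the unit part, your idea (use $1/q$ for primes $q$ and run a divisibility argument in $\Z[\delta]$) is in the right spirit of Akiyama, but it is left too vague; the paper gets it from a single carefully chosen rational $x=\frac{1}{q\Delta^n}$ with $\Delta=|{\rm Norm}(\delta)|$ and $q\mathcal{D}\subset\Z[\delta]$. Pure periodicity and $x<\delta^{-D}$ force $x(\delta^s-1)=\frac1q F(\delta)$ with $F\in\Z[X]$, whence $\delta^s - 1 = \Delta^n F(\delta)$. Since $\delta$ is a root of $X^s-\Delta^nF(X)-1$, whose constant term is $\equiv -1 \pmod{\Delta}$, and ${\rm Norm}(\delta)$ divides that constant term, one gets $\Delta=1$ immediately.

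The positivity step in your plan is wrong as stated. You argue that ``the images $\psi(x)$ of small rationals are dense \ldots they cannot all lie in a bounded interval.'' But $\psi$ is a field embedding, so $\psi$ fixes $\Q$ pointwise: $\psi(x)=x$ for $x\in\Q$, and these images lie in $[0,\gamma)$, a bounded interval. The Rauzy-fractal/compactness mechanism you describe does not produce the contradiction. The paper's argument is far shorter and avoids all of this: with $\psi(\beta_i)>0$ for all $i$, every digit satisfies $\psi(d)>0$, and since $\delta$ is Pisot or Salem one has $\eta=\psi(\delta)\in(0,1)$. Applying $\psi$ to the identity $x(\delta^s-1)=\sum_k d_k\delta^{s-k}$ (valid because $x$ is rational, hence fixed by $\psi$) yields $x(\eta^s-1)=\sum_k\psi(d_k)\eta^{s-k}$, whose left side is negative while the right side is nonnegative --- a direct contradiction. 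You should replace your compactness heuristic with this sign comparison.
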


The proof of Theorem~\ref{thm:necessary}, is provided in Section~\ref{sec:necessary}.
A partial converse of Theorem~\ref{thm:necessary}, a sufficient condition for (PP), is the following.

\begin{theoremcisla}\label{thm:sufficient}
 Let $\BB=(\beta_1,\dots,\beta_p)$ be an alternate base with Property (F) such that $\delta=\prod_{i=1}^p\beta_i$ is a Pisot unit. Then $\BB^{(i)}$ satisfies (PP) for every $i=1,\dots,p$.
\end{theoremcisla}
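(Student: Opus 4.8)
The plan is to adapt Akiyama's strategy for the case $p=1$ to the alternate-base setting, working throughout in the field $K=\Q(\delta)$ and using the embedding geometry that the hypotheses make available. First I would recall that Property (F) for $\BB$ together with $\delta$ being a Pisot unit forces, via Theorem~\ref{thm:PKreczman}(2) and the results of~\cite{MPS23-LNCS}, that each $\beta_i\in K$ and that $\delta$ is an algebraic unit whose conjugates $\neq\delta$ all lie strictly inside the unit disc. The key object is the $\BB$-transformation $T_\BB$ on $[0,1)$ and its $i$-th shifted version $T_{\BB^{(i)}}$, whose orbits record the digits of the $\BB^{(i)}$-expansion; a rational $x$ has an eventually periodic $\BB^{(i)}$-expansion iff its $T_{\BB^{(i)}}$-orbit is finite, and it is \emph{purely} periodic iff $x$ lies in the image of the return map, i.e.\ iff $x$ is in the ``natural extension domain'' restricted to its own fibre.

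The main construction would be a geometric realization (a natural extension / two-sided cover) of the alternate-base system as a domain $\mathcal{X}\subseteq K\otimes_\Q\R\cong\R^d$, built as a finite union of the pieces $\mathcal{X}^{(1)},\dots,\mathcal{X}^{(p)}$, on which a piecewise-affine bijective map $\widehat{T}$ acts, contracting in the conjugate (non-identical-embedding) coordinates by factors governed by the $\psi(\beta_i)$ and expanding in the real coordinate. Using Property (F) one shows, exactly as in~\cite{A98}, that the set of finite $\BB$-expansions is dense enough that $\mathcal{X}$ contains a neighbourhood of $\mathbf{0}$ in the contracting directions; this is the step where (F) is genuinely used, and it is the heart of the argument. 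Then for a rational $x\in[0,\gamma)$ with $\gamma$ small enough, the Galois conjugate point $(x,\psi_2(x),\dots,\psi_d(x))$ — whose contracting coordinates are small because $x$ is a small rational with bounded denominator behaviour — lands inside $\mathcal{X}^{(i)}$, and membership in $\mathcal{X}^{(i)}$ is precisely equivalent to pure periodicity of the $\BB^{(i)}$-expansion of $x$. Running this for each $i=1,\dots,p$ simultaneously (the shifts cycle through the pieces) gives (PP) for every $\BB^{(i)}$.

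Two technical points need care. First, the natural-extension domain $\mathcal{X}$ must be shown to be bounded and to have nonempty interior in the contracting subspace; boundedness follows from the admissibility (Parry-type lexicographic) conditions characterizing $\BB$-expansions recalled in the introduction, and nonempty interior is where one invokes (F) via a covering argument over finite expansions. Second, one must check that the set of rationals whose conjugate vector lands in that interior neighbourhood really does contain an interval $[0,\gamma)\cap\Q$: here the point is that a rational $x=a/b$ in lowest terms, when small, has a conjugate vector $\psi_j(x)$ that is controlled — because $x$ itself is forced to have an eventually periodic, in fact finite-tailed, expansion and hence $x\in\Z[\beta_1^{\pm1},\dots,\beta_p^{\pm1}]$ localized appropriately — so that smallness of $x$ in the real place propagates to smallness at the other places after clearing the (unit) denominator.

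The step I expect to be the main obstacle is establishing that $\mathcal{X}$ (equivalently, the closure of the set of ``tails'' of finite $\BB$-expansions under the conjugate embeddings) has nonempty interior, i.e.\ contains a full-dimensional box around the origin in the contracting directions. In the classical $p=1$ case this is Akiyama's lemma that (F) implies the relevant tile has $\mathbf{0}$ as an interior point; transporting it to the alternate base requires tracking the $p$ pieces $\mathcal{X}^{(i)}$ together and verifying that the piecewise-affine dynamics $\widehat{T}$ mixes them the way the cyclic shift on bases dictates, so that density of finite expansions in base $\BB^{(i)}$ for \emph{one} $i$ bootstraps to all $i$. Once that is in hand, everything else is the routine ``small rational $\Rightarrow$ conjugates small $\Rightarrow$ inside the tile $\Rightarrow$ purely periodic'' chain.
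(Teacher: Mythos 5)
Your approach via a natural-extension domain is a genuinely different route from the paper's, which never constructs a tile or a two-sided dynamical system. The paper's argument is entirely algebraic and exhibits the periodic expansion directly. For a small rational $x$, one uses Akiyama's observation that there are infinitely many $N$ with $z := x(\delta^N - 1) \in \Z[\delta]$; since $\delta$ is a unit and $\BB$ has Property~(F), one gets $\Z[\delta]\subseteq{\rm fin}(\BB)$, so $z$ has a finite $\BB$-expansion $\sum_{j=s}^{n} d_j\delta^j$. Two quantitative lemmas on conjugate sizes --- Lemma~\ref{le:pomoc1}, saying that a nonzero fractional tail of depth at least $k$ has a conjugate of modulus at least $c$, and Lemma~\ref{le:pomoc2}, giving a uniform lower bound $\kappa$ on conjugates of elements of $\N_{\BB}\setminus\delta\N_{\BB}$ --- combined with a suitable choice of $N$ and $\gamma<\kappa$ force $s\geq 0$, i.e.\ $z\in\N_{\BB}$. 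Finally Remark~\ref{re:hodneNul}, a combinatorial consequence of (F) applied to $d_{\BB^{(i)}}(1)$, guarantees that infinitely concatenating the digit block of $z/\delta^N$ stays $\BB$-admissible, and this concatenation represents $x$. The shifts $\BB^{(i)}$ are then covered by the fact (from \cite{MPS23-LNCS}) that (F) for $\BB$ implies (F) for every $\BB^{(i)}$.

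Your sketch would plausibly work, but it leaves the two hardest steps as black boxes that do not currently exist in the alternate-base literature: (a) the natural extension of the $\BB$-transformation, with its $p$-cyclic structure across the pieces $\mathcal{X}^{(1)},\dots,\mathcal{X}^{(p)}$, and the equivalence ``purely periodic $\Leftrightarrow$ conjugate point lies in $\mathcal{X}^{(i)}$'', must be built and proved from scratch; (b) the nonempty-interior property of $\mathcal{X}$ under (F), which you correctly flag as the crux, is precisely the alternate-base analogue of Akiyama's lemma and needs a genuine argument that tracks all $p$ pieces simultaneously. The paper sidesteps all of this by replacing the geometry with the elementary conjugate-norm estimates in Lemmas~\ref{le:pomoc1}--\ref{le:pomoc2}. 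One further over-complication in your write-up: since the $x$ in question are rational, every embedding $\psi$ fixes $x$, so the conjugate point is simply $(x,\dots,x)$ and ``small rational $\Rightarrow$ conjugates small'' is immediate; the discussion of membership in $\Z[\beta_1^{\pm1},\dots,\beta_p^{\pm1}]$ and of clearing unit denominators is not needed at that stage.
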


Theorem~\ref{thm:sufficient} is shown in Section~\ref{sec:sufficient}. 
In the last Section~\ref{sec:example} we give a class of alternate bases with (PP) for which the constant $\gamma(\BB)$ is equal to 1. We also illustrate the fact that $\gamma(\BB^{(i)})$ may be different from $\gamma(\BB)$.  

%Je možné, že ne, protože v případě záporných bází to nefunguje už ani u kvadratických.
%
%Budeme zkoumat, jak to funguje u alternate base.
%První výsledek v tomto směru \cite{CCK23}. Ukázali, že ...
%(asi citovat úplně)
%a vyslovili otázku, zda stačí předpokládat periodicitu v jednom posunu báze.
%Cílem naším je kladně odpovědět na tuto otázku. In particular, dokazujeme ...

%Posléze v analogii k jedné bázi definujeme vlastnost PP.
%Dokazujeme, že 
%PP implikuje $\delta$ je Pisot unit a $\beta_i$ v tělese.
%Naopak splňuje-li $\delta$ vlastnost (F) a je unit, pak už má PP. 
%
%Nakonec uvádíme třídu bází, ve které interval pro PP je celé [0,1].

%%%%%%%%%%%%%%%%%%%%%%%%%%%%%%%%%%%%%%%%%%%%%%%%%%%%
\section{Preliminaries}\label{sec:preli}

Cantor real base is given by a sequence $\BB=(\beta_k)_{k\geq 1}$
of real numbers $\beta_k>1$. Any $x\in[0,1]$ is represented in $\BB$ as a series
$$
x=\sum_{k=1}^{+\infty} \frac{x_k}{\prod_{i=1}^k \beta_i} \qquad \text{with } x_k\in\N.
$$
The sequence of integer digits $x_1x_2x_3\cdots$ is called a $\BB$-representation of $x$. The greatest $\BB$-representation of $x$ in lexicographic order, called the $\BB$-expansion of $x$, is the one obtain by the greedy algorithm:
Set $r_0=x$, and for $k\geq 0$ set $a_{k+1}=\floor{\beta_{k+1} r_{k}}$, $r_{k+1}=\beta_{k+1} r_{k} - a_{k+1}$. We denote the $\BB$-expansion of $x$ by $d_{\BB}(x)=a_1a_2a_3\cdots$. Note that $0\leq a_k < \beta_k$. 
Setting $r_0=1$, one defines the greedy expansion of 1, $d_{\BB}(1)=t_1t_2t_3\cdots$. 

For characterisation of integer sequences that are admissible as $\BB$-expansions of numbers from the interval $[0,1)$, one needs to define the quasigreedy expansion of 1, denoted $d_{\BB}^*(1)$, as the lexicographically greatest $\BB$-representation of 1 with infinitely may non-zero digits. 

Given a Cantor real base $\BB=(\beta_1,\beta_2,\beta_3,\dots)$, denote for $i\geq 1$ the shift  of the base, $\BB^{(i)}=(\beta_i,\beta_{i+1},\beta_{i+2},\dots)$. The following statement was proved in~\cite{CC21}.

\begin{theorem}[\cite{CC21}]\label{thm:admis}
Let $\BB=(\beta_k)_{k\geq 1}$ be a Cantor real base. A sequence of integers $x_1x_2x_3\cdots$ is a $\BB$-expansion of a number $x\in[0,1)$ if and only if for every $i\geq 1$ we have
$$
0^\omega \preceq x_ix_{i+1}x_{i+2}\cdots \prec d_{\BB^{(i)}}^*(1).
$$
\end{theorem}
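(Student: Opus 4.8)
The strategy is to reduce both directions to two facts about a single (possibly shifted) Cantor real base: strict monotonicity of the greedy expansion, and a \emph{value lemma} asserting that a sequence all of whose tails obey the upper lexicographic condition has a well-defined value lying in $[0,1)$. The two reductions then follow from the self-similarity of the greedy algorithm under the shift $\sigma$.

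\textbf{Necessity.} Suppose $x_1x_2x_3\cdots=d_{\BB}(x)$ with $x\in[0,1)$, and let $r_0=x$, $r_k=\beta_kr_{k-1}-a_k$ be the greedy remainders, so that $r_k\in[0,1)$ and $a_k=x_k$ for all $k$. Unwinding the greedy algorithm one checks that for every $i\ge1$ the tail $x_ix_{i+1}x_{i+2}\cdots$ is precisely the $\BB^{(i)}$-expansion $d_{\BB^{(i)}}(r_{i-1})$ of the point $r_{i-1}\in[0,1)$. Hence it suffices to prove that for \emph{every} Cantor real base $\mathcal{C}=(c_k)_{k\ge1}$ and every $z\in[0,1)$ one has $0^\omega\preceq d_{\mathcal{C}}(z)\prec d^*_{\mathcal{C}}(1)$. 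The left inequality is immediate, as digits are non-negative. For the right one I would first record, as preliminaries in the spirit of Parry, the basic structure of the quasigreedy expansion $d^*_{\mathcal{C}}(1)=t^*_1t^*_2\cdots$: it has infinitely many non-zero digits, one has $t^*_j\le\floor{c_j}$, it is \emph{self-admissible} in the sense that $\sigma^j d^*_{\mathcal{C}}(1)\preceq d^*_{\mathcal{C}^{(j+1)}}(1)$ for all $j\ge0$, and it equals the (non-attained) lexicographic supremum of $\{d_{\mathcal{C}}(z):z\in[0,1)\}$. The last property gives the right inequality directly; equivalently, if $d_{\mathcal{C}}(z)\succeq d^*_{\mathcal{C}}(1)$ one compares the two at the first index of disagreement and, using the value lemma for the appropriate shifted base, concludes that the value of $d_{\mathcal{C}}(z)$, namely $z$, would be $\ge1$, a contradiction.

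\textbf{Sufficiency.} Conversely, let $w=x_1x_2\cdots$ satisfy $0^\omega\preceq\sigma^{i-1}w\prec d^*_{\BB^{(i)}}(1)$ for all $i\ge1$. The value lemma applied to $w$ and the base $\BB$ shows that the series $x=\sum_{k\ge1}x_k/\prod_{j=1}^k\beta_j$ converges to some $x\in[0,1)$. It remains to check that the greedy algorithm on $x$ reproduces $w$. Write $\beta_1x=x_1+y$, where $y=\sum_{k\ge1}x_{k+1}/\prod_{j=1}^k\beta_{j+1}$ is the value of the sequence $\sigma w$ in the base $\BB^{(2)}$; since $\sigma w$ satisfies the hypotheses relative to $\BB^{(2)}$, the value lemma gives $y\in[0,1)$, whence $a_1=\floor{\beta_1x}=x_1$ and $r_1=y$. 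Iterating, $\sigma^k w$ is $\BB^{(k+1)}$-admissible with value $r_k\in[0,1)$ for every $k$, so $a_{k+1}=x_{k+1}$; therefore $d_{\BB}(x)=w$.

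\textbf{Main obstacle.} The technical core is the value lemma: if every tail of $u=u_1u_2\cdots$ satisfies $\sigma^j u\prec d^*_{\mathcal{C}^{(j+1)}}(1)$, then $\sum_k u_k/\prod_{i=1}^k c_i$ converges to a value in $[0,1)$. Lexicographic order of arbitrary non-negative integer sequences does not control this series---the crude digit bound $u_k<c_k$ is already too weak, as one sees in bases with $c_k$ close to $1$---so one must genuinely exploit the lexicographic constraints on \emph{all} shifts and telescope. The natural route is a first-difference comparison with $d^*_{\mathcal{C}}(1)$: if $u$ and $d^*_{\mathcal{C}}(1)$ first differ at position $m$ then $u_m\le t^*_m-1$, and the remaining tail of $u$ is dominated by the maximal admissible sequence of the shifted base $\mathcal{C}^{(m+1)}$; bounding its contribution against $1/\prod_{i\le m}c_i$ then yields the strict inequality. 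The delicate point is that this tail bound is itself an instance of the lemma, so it has to be set up as an induction on the position $m$ of first disagreement (or handled via a continuity/limiting argument for $d_{\mathcal{C}}(z)$ as $z\to1^-$); together with establishing the preliminary structural facts about $d^*_{\mathcal{C}}(1)$---in particular that it is the non-attained lexicographic supremum of the greedy expansions of the points of $[0,1)$---this is where essentially all the work lies. The two reductions above are then a routine unwinding of the greedy recursion.
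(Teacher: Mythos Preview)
The paper does not prove this theorem at all: it is quoted from \cite{CC21} with the sentence ``The following statement was proved in~\cite{CC21}'' and no argument is given. So there is no ``paper's own proof'' to compare against.

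As for your sketch, it is the standard Parry-type argument and is essentially the route taken in \cite{CC21}. Your decomposition into (a) self-similarity of the greedy algorithm under the shift and (b) a value lemma for admissible sequences is correct, and you have put your finger on the only genuinely nontrivial point: showing that a sequence all of whose shifts are lexicographically below $d^*_{\BB^{(i)}}(1)$ has value strictly less than $1$. One small remark: in the necessity direction you do not actually need the value lemma. You already know that $x_ix_{i+1}\cdots=d_{\BB^{(i)}}(r_{i-1})$ with $r_{i-1}\in[0,1)$, so it suffices to prove that $d_{\mathcal C}(z)\prec d^*_{\mathcal C}(1)$ for $z\in[0,1)$; this follows directly from the definition of $d^*_{\mathcal C}(1)$ as the lexicographically greatest representation of $1$ with infinitely many nonzero digits together with the fact that the greedy expansion of $z<1$ represents $z<1$ (so it cannot equal $d^*_{\mathcal C}(1)$) and is lexicographically maximal among representations of $z$ (so it cannot exceed any representation of $1$). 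The value lemma is really only needed for sufficiency, and there your inductive/telescoping plan on the position of first disagreement, combined with the self-admissibility $\sigma^j d^*_{\mathcal C}(1)\preceq d^*_{\mathcal C^{(j+1)}}(1)$, is exactly how it is done.
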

By $\preceq$ we denote the standard lexicographic order; $w^\omega$ stands for infinite repetition of the string $w$.

If the base $\BB$ is a purely periodic sequence with period length $p$, i.e.\ $\beta_{k+p}=\beta_k$ for any $k\geq 1$, then $\BB^{(k+p)}=\BB^{(k)}$ for any $k\geq 1$. In this case we speak about an alternate base and write $\BB=(\beta_1,\dots,\beta_p)$. The special case when $p=1$ corresponds to the numeration system with a single base $\beta>1$, as was defined by Rényi and extensively studied by many authors from very diverse points of view.

In~\cite{MPS23-LNCS} the set ${\rm Fin}(\BB)$ of numbers with $\BB$-expansions having only finitely many non-zero digits is considered. We call such expansions finite. We denote
$$
{\rm Fin}(\BB)=\{x\in[0,1):  d_{\BB}(x)\text{ is finite}\}.
$$
We say that the base $\BB$ satisfies the finiteness property (F), if 
for any $x,y\in{\rm Fin}(\BB)$, we have
\begin{equation}\label{eq:F} 
x+y\in[0,1)\implies x+y\in\Fin(\BB)\quad \text{and}\quad 
x-y\in[0,1)\implies x-y\in\Fin(\BB).
\end{equation}
In~\cite{MPS23-LNCS}, some necessary and some sufficient conditions for an alternate base $\BB$ with period $p$ to satisfy (F) are presented. Among other, it is shown that if $\BB$ satisfies the finiteness property, then $\delta=\prod_{i=1}^p\beta_i$ is a Pisot or a Salem number, $\beta_i\in\Q(\delta)$ and for any non-identical embedding $\psi$ of $\Q(\delta)$ into $\C$, the vector $(\psi(\beta_1),\dots,\psi(\beta_p))$ is not positive.

Recall that a complex number $\delta>1$ is a Pisot number, if it is an algebraic integer, i.e.\ a root of a monic polynomial with integer coefficients, with all conjugates in the interior of the unit circle. The number $\delta>1$ is a Salem number, if it is an algebraic integer with all conjugates in the unit circle and at least one of modulus equal to 1. The algebraic extension of rational numbers by $\delta$ is denoted by $\Q(\delta)=\{a_0+a_1\delta+\cdots a_{n-1}\delta^{n-1}:a_i\in\Q\}$, where $n$ is the degree of $\delta$ as an algebraic number. Such a field $\Q(\delta)$ has $n$ embeddings into $\C$ (including the identity), i.e. field monomorphisms $\psi: \Q(\delta)\to\C$, induced by $\delta\mapsto \delta'$ where $\delta'$ is a conjugate of $\delta$.

In this paper we are particularly interested in numbers with eventually and purely periodic $\BB$-expansions. According to~\cite{CCK23}, we define 
$$
{\rm Per}(\BB)=\{x\in[0,1): d_{\BB}(x)\text{ is eventually periodic} \}.
$$
%and study under which circumstances every element of $\Q(\delta)\cap[0,1)$ has an eventually periodic $\BB$-expansions (see Theorem~\ref{thm:PKreczman}).

The second part of this paper is focused to rational numbers with purely periodic $\BB$-expansion. 

\begin{definition}\label{def:PuPe} An alternate base $\BB = (\beta_1, \beta_2, \ldots, \beta_p)$  has the Pure Periodicity Property (PP), if there exists $\gamma > 0$ such that $  d_{\BB}(x)$  is purely periodic for every  $x \in [0, \gamma)\cap \Q$. 
\end{definition}

In the proof of our result given in Theorem~\ref{thm:sufficient}, we will need to extend the definition of a $\BB$-expansion to numbers outside of the unit interval. In~\cite{CCMP24}, this is done with full generality for a two-way Cantor real base, here we simplify the task by considering an alternate base $\BB=(\beta_1,\dots,\beta_p)$, with $\delta=\prod_{i=1}^p\beta_i$.
For a given non-negative number $x$ find $k\in\N$ such that 
$z=\frac{x}{\delta^k}\in[0,1)$. Denote $d_{\BB}(z)=a_1a_2a_3\cdots$. Then set 
$$
d_{\BB}(x)=a_1a_2\cdots a_{pk}\bigcdot a_{pk+1}a_{pk+2}\cdots.
$$
Having as a convention that two expansions coincide if they are the same up to leading zeros, $d_{\BB}(x)$ is unique not dependent on the choice of $k$. 
With this in hand, we can define the set 
$$
{\rm fin}(\BB)=\pm\bigcup_{k\in\N} \delta^k {\rm Fin}(\BB).
$$
which gives the set of all real numbers whose absolute value has a finite $\BB$-expansion. Property (F) then translates to saying that ${\rm fin}(\BB)$ is closed under addition and subtraction.

We further define the $\BB$-integers as numbers having only zeros on the right from the fractional point. We denote
$$
\N_{\BB} =  \{x \geq 0: d_{\BB}(x) = a_1\cdots a_n \bigcdot 0^\omega\}.
$$

%XXXXX pozor nahradit $\Z_\BB$ přes $\N_\BB$.

%%%%%%%%%%%%%%%%%%%%%%%%%%%%%%%%%%%%%%%%%%%%%%%%%%%%
\section{Proof of Theorem~\ref{thm:SchmidtLepsi}}\label{sec:ThmA}

The expansion of a real number $x\in[0,1)$ in the alternate base $\BB=(\beta_1,\dots,\beta_p)$ is of the form
$d_{\BB}(x)=x_1x_2x_3\cdots$ with integer digits $x_k\in\N$, $x_k < \beta_k$. We can rewrite 
\begin{equation}\label{eq:uprava}
x=\sum_{k=1}^{+\infty} \frac{x_k}{\prod_{i=1}^k \beta_i}  = \sum_{k=1}^{+\infty}\delta^{-k}\  \sum_{j=1}^p\ x_{(k-1)p+j}\, \Bigl(\!\!\prod_{i=j+1}^p\!\! \beta_i\Bigr)  = \sum_{k=1}^{+\infty}  d_k\, \delta^{-k}\ .
\end{equation}
The latter can be viewed as a representation of $x$ in the base $\delta=\prod_{i=1}^p\beta_i$ with digits 
$d_k$ belonging to the alphabet 
\begin{equation}\label{eq:Dabeceda}
\mathcal{D} := \Bigl\{a_{1} \Bigl(\,\prod_{i=2}^p\beta_i\Bigr)  + a_2 \Bigl(\,\prod_{i=3}^p\beta_i\Bigr)+ \cdots + a_{p-1}\beta_p + a_p: a_k \in \mathbb{N}, a_k< \beta_k\Bigr\}.
\end{equation}

In order to simplify the notation, denote 
\begin{equation}\label{eq:v}
\vec{v} =(v_1,\dots,v_p)^T= \! \bigg(\!\prod\limits_{i=2}^p \beta_i, \prod\limits_{i=3}^p \beta_i,
\ldots,  \prod\limits_{i=p}^p \beta_i, 1\bigg)^T .    
\end{equation}
Then we can express the alphabet $\mathcal{D}$ as
\begin{equation}\label{eq:Dabeceda1}
 \mathcal{D} = \big\{(a_1,a_2, \ldots, a_p) \vec{v} :  a_k \in \mathbb{N}, a_k< \beta_k \big\}.   
\end{equation}

Suppose now that the $\BB$-expansion of $x$ is eventually periodic. The lengths of the preperiod and the period can always be assumed to be multiples of $p$, say
$$
d_{\BB}(x) = x_1x_2\cdots x_{pr}(x_{pr+1}x_{pr+2}\cdots x_{p(r+s)})^\omega,
$$
which yields an eventually periodic $\delta$-representation of $x$, say 
$d_1\cdots d_r(d_{r+1}\cdots d_{r+s})^\omega$.
For the value of $x$ we can therefore write
$$
\begin{aligned}
x&=\frac{d_1}{\delta}+\cdots+\frac{d_r}{\delta^r}+\big(\frac{d_{r+1}}{\delta^{r+1}}+\cdots+\frac{d_{r+s}}{\delta^{r+s}}\big)\sum_{i=0}^{\infty}\delta^{is}=\\
&=\frac{1}{\delta^r}(d_1\delta^{r-1}+d_2\delta^{r-2}+\cdots +d_r) + \\
&\qquad +
\frac{1}{\delta^{r}(\delta^s-1)}(d_{r+1}\delta^{s-1}+d_{r+2}\delta^{s-2}+\cdots+d_{r+s})\,,
\end{aligned}
$$
which gives
$$
x\delta^r(\delta^s-1) = (\delta^s-1)\sum_{k=1}^r d_k\delta^{r-k} + \sum_{k=1}^sd_{r+k}\delta^{s-k}\,.
$$

Realizing that the digits $d_k\in\mathcal{D}$ are of the form
$$
d_k = \bigl(x_{p(k-1)+1}, x_{p(k-1)+2}, \ldots, x_{p(k-1)+p-1}, x_{pk} \bigr)\vec{v},
$$ 
we can rewrite the value of the product $x\delta^r(\delta^s-1)$ as
\begin{equation}\label{eq:expevper}
x\delta^r(\delta^s-1) = \Big( (\delta^s-1)g_1(\delta)+f_1(\delta),\dots,(\delta^s-1)g_p(\delta)+f_p(\delta)\Big)\vec{v}\,,
\end{equation}
where $g_i$, $f_i$, $i=1,\dots,p$, are polynomials with integer non-negative coefficients 
$$
g_i(X) = \sum_{k=1}^r x_{p(k-1)+i}X^{r-k}\,,\qquad
f_i(X) = \sum_{k=r+1}^{r+s} x_{p(k-1)+i}X^{r+s-k}\,,
$$ 
of degrees $\deg g_i \leq r-1$, $\deg f_i \leq s-1$.

In the particular case where the $\BB$-expansion of $x$ is purely periodic, we have $r=0$, the polynomials $g_i$ for $i=1,\dots,p$ vanish and we can simplify to
\begin{equation}\label{eq:Xperiodic}
\begin{gathered}  
x(\delta^s-1)=\sum_{k=1}^sd_{k}\delta^{s-k}=\big( f_1(\delta),\dots,f_p(\delta)\big)\vec{v}\,,\\
\quad\text{with}\quad 
f_i(X) = \sum_{k=1}^{s} x_{p(k-1)+i}X^{s-k}\,. 
\end{gathered}
\end{equation}

Now assume that we have $p$ rational numbers $\frac{p_j}{q_j}$, $j=1,\dots,p$, with eventually periodic $\BB$-expansions $d_{\BB}(\frac{p_j}{q_j})=x_1^{(j)}x_2^{(j)}\cdots$.
Without loss of generality, we can assume that all the expansions have the preperiod $pr$ and the period $ps$ of the same length, i.e.
\begin{equation}\label{eq:poslperrozvojzlomky}
d_{\BB}\big(\tfrac{p_j}{q_j}\big)=x_1^{(j)}x_2^{(j)}\cdots x_{pr}^{(j)}\big(x_{pr+1}^{(j)}x_{pr+2}^{(j)}\cdots x_{p(r+s)}^{(j)}\big)^\omega.
\end{equation}
Then we have $p$ equalities of the form~\eqref{eq:expevper} that can be rewritten together into a matrix form, $M\vec{v}=\vec{0}$, where the matrix $M$ is given by
\begin{equation}\label{eq:matice M}
M= \left(
\begin{array}{ccccc}
q_1 h^{(1)}_1(\delta)& q_1 h^{(1)}_2(\delta)& \cdots & q_1 h^{(1)}_{p-1}(\delta)& q_1 h^{(1)}_{p}(\delta) - p_1\delta^r(\delta^s\!-\!1)\\[2mm]
q_2 h^{(2)}_1(\delta)& q_2 h^{(2)}_2(\delta)& \cdots & q_2 h^{(2)}_{p-1}(\delta)& q_2 h^{(2)}_{p}(\delta) -p_2\delta^r(\delta^s\!-\!1)\\
\vdots& \vdots & & \vdots & \vdots\\[2mm]
q_p h^{(p)}_1(\delta)& q_p h^{(p)}_2(\delta)& \cdots & q_p h^{(p)}_{p-1}(\delta)& q_p h^{(p)}_{p}(\delta) -p_p\delta^r(\delta^s\!-\!1)\\
\end{array}\right)\,,   
\end{equation}
where for simplicity we have denoted
$$
h_i^{(j)}(X) = (X^s-1)g_i^{(j)}(X) + f_i^{(j)}(X)
$$
and
\begin{equation}\label{eq:fg}
g_i^{(j)}(X) = \sum_{k=1}^r x_{p(k-1)+i}^{(j)}X^{r-k}\,,\qquad
f_i^{(j)}(X) = \sum_{k=r+1}^{r+s} x_{p(k-1)+i}^{(j)}X^{r+s-k}\,.    
\end{equation}

%\begin{proposition}
%Let $\BB=(\beta_1,\dots,\beta_p)$ be an alternate base and $\delta=\prod_{i=1}^p\beta_i$. Suppose there exists a matrix $M\in\Q(\delta)^{p\times p}$ such that $M\vec{v}=\vec{0}$. Then the following hold.
%\begin{enumerate}
%    \item $\delta$ is an algebraic number.
%    \item If the rank of $M$ is $p-1$, then $\beta_i\in\Q(\delta)$ for every $i=1,2,\dots,p$.
%\end{enumerate}
%\end{proposition}

\begin{lemma}\label{l:omatici}
Let $\BB=(\beta_1,\dots,\beta_p)$ be an alternate base and $\delta=\prod_{i=1}^p\beta_i$. Suppose there exists a non-singular $p\times p$ matrix $M(X)$ whose entries are integer polynomials, i.e.\ belong to $\Z[X]$. Let $M(\delta)\vec{v}=\vec{0}$, where $\vec{v}$ is given by~\eqref{eq:v}. Then the following hold.
\begin{enumerate}
    \item[1)] $\delta$ is an algebraic number.
    \item[2)] If the rank of $M(\delta)$ is $p-1$, then $\beta_i\in\Q(\delta)$ for every $i=1,2,\dots,p$.
\end{enumerate}
\end{lemma}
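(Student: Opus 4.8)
The plan is to use that a matrix of integer polynomials which is non-singular over the field of rational functions becomes singular at $X=\delta$ precisely because $\delta$ is a root of its determinant, and then, under the rank hypothesis, to pin down $\vec{v}$ by comparing the kernel of $M(\delta)$ over $\C$ with a kernel vector that visibly has coordinates in $\Q(\delta)$.

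For 1) the argument is immediate: non-singularity of $M(X)$ means $\det M(X)$ is a nonzero element of $\Z[X]$. Since $\vec{v}$ has last coordinate $v_p=1\neq 0$, the identity $M(\delta)\vec{v}=\vec{0}$ forces $M(\delta)$ to be singular, hence $\det M(\delta)=0$. Thus $\delta$ is a root of the nonzero integer polynomial $\det M$, i.e.\ $\delta$ is algebraic.

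For 2) assume $\operatorname{rank} M(\delta)=p-1$. The rank of a matrix over a field is unchanged under field extension, so $M(\delta)$ has rank $p-1$ both over $\Q(\delta)$ and over $\C$; in particular $\dim_{\C}\ker M(\delta)=1$, and since $\vec{v}\in\ker M(\delta)$ is nonzero we get $\ker_{\C}M(\delta)=\C\vec{v}$. Next I would produce a nonzero kernel vector with coordinates in $\Q(\delta)$: the adjugate $\operatorname{adj}M(\delta)$ has entries in $\Z[\delta]\subseteq\Q(\delta)$, satisfies $M(\delta)\operatorname{adj}M(\delta)=\det M(\delta)\cdot I=0$, and is nonzero because a rank-$(p-1)$ matrix has a nonvanishing $(p-1)\times(p-1)$ minor. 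Hence some column $\vec{w}$ of $\operatorname{adj}M(\delta)$ lies in $\ker M(\delta)\setminus\{\vec{0}\}$ and has coordinates in $\Q(\delta)$. Comparing with $\ker_{\C}M(\delta)=\C\vec{v}$ gives $\vec{w}=\mu\vec{v}$ with $\mu\in\C\setminus\{0\}$; reading off the last coordinate yields $\mu=w_p\neq 0$, whence $\vec{v}=w_p^{-1}\vec{w}\in\Q(\delta)^p$. So every $v_i=\prod_{k=i+1}^p\beta_k$ lies in $\Q(\delta)$, and since $\beta_1=\delta/v_1$ and $\beta_i=v_{i-1}/v_i$ for $i=2,\dots,p$ (all denominators being nonzero reals), each $\beta_i\in\Q(\delta)$.

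The one point requiring care, and where a careless argument could slip, is the transfer of rationality in 2): it is the rank-$(p-1)$ hypothesis that makes the $\C$-kernel one-dimensional, and the normalization $v_p=1$ that then forces the scalar relating $\vec{w}$ and $\vec{v}$ to lie in $\Q(\delta)$. Without the rank assumption only part 1) survives, since then $\ker_{\C}M(\delta)$ could have dimension larger than $1$ and need not meet the $\Q(\delta)$-rational kernel in the line spanned by $\vec{v}$.
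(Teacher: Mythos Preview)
Your proof is correct and follows essentially the same approach as the paper: both show $\delta$ is a root of $\det M(X)\in\Z[X]\setminus\{0\}$, and for part 2) both exploit one-dimensionality of the kernel to compare $\vec{v}$ with a nonzero kernel vector having coordinates in $\Q(\delta)$. The only difference is cosmetic: the paper simply asserts that such a $\Q(\delta)$-rational kernel vector can be chosen and then reads off the $\beta_i$ as ratios $u_{i-1}/u_i$, whereas you construct it explicitly via a nonzero column of the adjugate and use the normalization $v_p=1$ to conclude $\vec{v}\in\Q(\delta)^p$ outright.
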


\begin{proof}
 By assumption, the determinant of $M(X)$ is a non-zero polynomial with integer coefficients, say $\det M(X) = F(X)$. For Item 1), it suffices to realize that $\vec{v}$ is non-zero, and thus the matrix $M(\delta)$ must be singular. We have $\det M(\delta)=0 = F(\delta)$, which proves that $\delta$ is an algebraic number.

 Let us prove Item 2). Since  $M(\delta)\vec{v}=\vec{0}$, the vector $\vec{v}$ is an eigenvector of $M(\delta)$ corresponding to the eigenvalue 0. As rank of $M(\delta)$ is $p-1$, the corresponding eigenvector is unique up to multiplication by a constant. 
 In particular, for any real vector $\vec{u}$ satisfying $M(\delta)\vec{u}=\vec{0}$ there exists $\alpha\in\R$ such that $\vec{u}=\alpha\vec{v}$.

 Since the entries of $M(\delta)$ belong to $\Q(\delta)$, we can choose the eigenvector $\vec{u}$ to have entries in $\Q(\delta)$. We then have
 $$
 \beta_i = \frac{v_{i-1}}{v_i} = \frac{u_{i-1}}{u_i}\in\Q(\delta)\,,\ \text{for } i=2,3,\dots,p,\ \text{ and }\  \beta_1=\frac{\delta}{\beta_2\cdots\beta_p}\in\Q(\delta).
 $$
\end{proof}

In what follows, we will set the choice of the rational numbers $\frac{p_j}{q_j}$ so that the matrix $M$ of~\eqref{eq:matice M} satisfies the assumptions of Lemma~\ref{l:omatici}.

\begin{lemma}\label{l:deltaAlgebraic}  
Let $\BB = (\beta_1, \beta_2, \ldots, \beta_p)$ be an alternate base.
Suppose that there exists a constant $\gamma>0$ such that every rational number in $[0,\gamma)$ has eventually periodic $\BB$-expansion. Then 
\begin{enumerate}
\item[1)]
$\delta=\prod_{i=1}^p\beta_i$ is an algebraic number, 
\item[2)] 
$\beta_i\in\Q(\delta)$ for $i=1,\dots,p$, and 
\end{enumerate}
\end{lemma}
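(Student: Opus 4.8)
The plan is to produce, out of a cleverly chosen $p$-tuple of rationals in $[0,\gamma)$, an integer‑polynomial matrix $M(X)$ that is non‑singular over $\Z[X]$, satisfies $M(\delta)\vec v=\vec 0$, and moreover has $\operatorname{rank}M(\delta)=p-1$, and then to apply Lemma~\ref{l:omatici}. The starting point is to repackage the entries of~\eqref{eq:matice M} geometrically. For a rational $x=a/b\in[0,\gamma)$ with eventually periodic expansion $d_\BB(x)=x_1x_2\cdots$, put $x^{[i]}:=\sum_{k\ge1}x_{p(k-1)+i}\delta^{-k}$, the value of the $i$‑th ``digit track''. Summing the geometric series in~\eqref{eq:expevper} gives $h_i^{(x)}(\delta)=\delta^{r}(\delta^{s}-1)\,x^{[i]}$, so that the $x$‑row of~\eqref{eq:matice M}, read as a polynomial vector, equals $X^{r}(X^{s}-1)\,\vec\xi_x(X)$, where $\vec\xi_x(X)=b\bigl(x^{[1]}(X),\dots,x^{[p]}(X)\bigr)-a\,(0,\dots,0,1)$ and $x^{[i]}(X):=h_i^{(x)}(X)/\bigl(X^{r}(X^{s}-1)\bigr)\in\Q(X)$ is the generating function of the $i$‑th track (independent of how one splits the expansion into preperiod and period). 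At $X=\delta$ this row is $\delta^{r}(\delta^{s}-1)\vec\xi_x(\delta)$, and $\vec\xi_x(\delta)\perp\vec v$ since $\vec v\cdot(x^{[1]},\dots,x^{[p]})=x$. So (non)singularity of such matrices is governed entirely by the vectors $\vec\xi_x(X)\in\Q(X)^p$.

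The technical heart will be a \emph{richness lemma}: for every real $\tau>1$ and every nonzero $(w_1,\dots,w_{p-1})\in\R^{p-1}$ there is a rational $x\in[0,\gamma)$ with $\sum_{i=1}^{p-1}w_i\bigl(\sum_{k\ge1}x_{p(k-1)+i}\tau^{-k}\bigr)\ne0$. I would prove it by an explicit construction: with $i_0=\min\{i:w_i\ne0\}$ and $N$ large enough that a short interval to the right of $\bigl(\delta^{N}\beta_1\cdots\beta_{i_0}\bigr)^{-1}$ lies in $[0,\gamma)$, pick a rational $x$ in that interval so that $\beta_1\cdots\beta_{i_0}\delta^{N}x$ exceeds $1$ by less than $\bigl(\beta_{i_0+1}\cdots\beta_{i_0+L}\bigr)^{-1}$; then $d_\BB(x)$ starts $0^{pN+i_0-1}\,1\,0^{L}\cdots$, so the $i_0$‑th track contributes $\tau^{-(N+1)}$ while every other track contributes only $O(\tau^{-(N+L/p)})$, and for $L$ large the sum is $\tau^{-(N+1)}(w_{i_0}+o(1))\ne0$. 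This step uses only the greedy algorithm, not eventual periodicity.

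Granting the richness lemma, I first show $\{\vec\xi_x(X):x\in\Q\cap[0,\gamma)\}$ spans $\Q(X)^p$. If not, some primitive $\vec C\in\Z[X]^p\setminus\{\vec0\}$ annihilates all of them, i.e.\ $b\sum_i C_i(X)h_i^{(x)}(X)=a\,C_p(X)X^{r}(X^{s}-1)$; taking $x=1/b$ for a large integer $b$ with $1/b<\gamma$ and using Gauss' lemma forces $b$ to divide the content of $C_p$, hence $C_p=0$; but then $\sum_{i<p}C_i(X)x^{[i]}(X)\equiv0$ for all such $x$, and evaluating this identity at a real $\tau>1$ that is not a root of $C_{i_0}(X)$ (with $i_0=\min\{i:C_i\ne0\}$) contradicts the richness lemma. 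Hence I can pick $x_1,\dots,x_p$ with $\vec\xi_{x_1}(X),\dots,\vec\xi_{x_p}(X)$ linearly independent over $\Q(X)$; stacking the corresponding rows (this is a matrix of the form~\eqref{eq:matice M} after clearing to a common preperiod and period) yields a non‑singular $M(X)\in\Z[X]^{p\times p}$ with $M(\delta)\vec v=\vec0$, so Lemma~\ref{l:omatici}(1) gives that $\delta$ is algebraic.

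Since $\delta$ is now algebraic, each $\vec\xi_x(\delta)$ lies in $\Q(\delta)^p$ and is $\perp\vec v$. Their $\Q(\delta)$‑span has dimension exactly $p-1$: otherwise some $\vec w\in\Q(\delta)^p\setminus\{\vec0\}$ has $\vec w\cdot\vec\xi_x(\delta)=0$ for all $x$, i.e.\ $(\vec w-w_p\vec v)\cdot(x^{[1]},\dots,x^{[p]})=0$ for all $x$; either $\vec w=w_p\vec v$, so $\vec v$ is a $\Q(\delta)$‑multiple of $\vec w$ and $\beta_i=w_{i-1}/w_i\in\Q(\delta)$ directly, or $(w_1-w_pv_1,\dots,w_{p-1}-w_pv_{p-1})\ne\vec0$, contradicting the richness lemma with $\tau=\delta$. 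So I may take $x_1,\dots,x_{p-1}$ realizing a $\Q(\delta)$‑basis of that span (these $\vec\xi_{x_j}(X)$ are then also $\Q(X)$‑independent — a primitive $\Q[X]$‑relation among them specialised at $\delta$ would make all its coefficients vanish at $\delta$, impossible by B\'ezout's identity), and add $x_p$ with $\vec\xi_{x_p}(X)$ outside their $\Q(X)$‑span (possible by the spanning just proved). The resulting $M(X)$ is non‑singular and has $\operatorname{rank}M(\delta)=p-1$, since its first $p-1$ rows already span the $(p-1)$‑dimensional space $\vec v^{\perp}$ while every row is $\perp\vec v$; Lemma~\ref{l:omatici}(2) then gives $\beta_i\in\Q(\delta)$ for all $i$. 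I expect the richness lemma — pinning down enough leading digits of the greedy expansion of a carefully chosen small rational and estimating the track sums — to be the only genuine obstacle; the linear‑algebra steps above are routine once one passes freely between polynomial identities in $\Q(X)$, their specialisations at $\delta$, and at auxiliary real points $\tau>1$.
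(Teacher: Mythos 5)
Your proposal is correct, and it rests on the same underlying device as the paper — rationals whose $\BB$-expansion begins $0^A1\,0^B$ with controllably large $A$ and $B$ — but it organizes the argument in a genuinely different way. The paper simply stacks $p$ such rationals, with prefixes $0^{pn+j-1}10^{pm-j}$ for $j=1,\dots,p$, into the matrix $M(X)$ of~\eqref{eq:matice M}, and reads off non-singularity of $M(X)$ (Item 1) from a pure degree comparison: the diagonal polynomial $h^{(j)}_j$ is monic of degree $s+r-n-1$, while the off-diagonal $h^{(j)}_i$ have degree at most $s+r-n-m-1$, so the diagonal product determines the leading term of $\det M(X)$. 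For Item 2), the paper then takes $m$ large enough that the upper-left $(p-1)\times(p-1)$ block of $M(\delta)$ is strictly diagonally dominant, hence invertible, forcing $\operatorname{rank}M(\delta)=p-1$ so that Lemma~\ref{l:omatici}(2) applies. Your route instead packages the same $0^A1\,0^B$ construction as a reusable ``richness lemma'' valid at an arbitrary real $\tau>1$, and then argues more abstractly: a hypothetical $\Z[X]$-relation $\vec C$ annihilating all $\vec\xi_x(X)$ is killed in its last coordinate by the Gauss'-lemma/denominator argument at $x=1/b$, and in the remaining coordinates by richness at a generic $\tau$, giving $\Q(X)$-spanning; richness at $\tau=\delta$ then gives the rank statement, and in fact your dichotomy ($\vec w$ proportional to $\vec v$ or not) directly yields $\vec v\in\Q(\delta)^p$ and hence $\beta_i\in\Q(\delta)$ without even passing back through Lemma~\ref{l:omatici}(2). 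The trade-off is roughly this: the paper's proof of Item 1 is shorter (the degree comparison is immediate, no Gauss step, no auxiliary $\tau$), but it needs two separate quantitative estimates (degree for Item 1, diagonal dominance for Item 2); your version has more overhead for Item 1 but handles both items with a single soft tool, and the richness lemma — which you correctly note uses only the greedy algorithm, not periodicity — is a cleanly stated, independently useful statement. Both approaches are sound.
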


\begin{proof}  
We will first make a suitable choice of $p$ rational numbers  $x^{(j)}=\frac{p_j}{q_j}, j=1,2,\ldots, p$  from the interval  $[0,\gamma)$ and form a matrix $M$ of the form~\eqref{eq:matice M}. For the proof of the statement, we than use Lemma~\ref{l:omatici}. 

Fix positive  $m\in \N$ and $n \in \N$ such that $\tfrac{1}{\delta^n} < \gamma$. For every $j=1,2,\ldots,p$ denote 
\begin{equation}\label{eq: Iintrvaly}
I_j= \{x \in [0,1): d_{\BB}(x) \ \text{has a prefix \ } 0^{j-1}10^{pm-j}\}.
\end{equation}
Since the ordering of numbers in the interval $(0,1)$
corresponds to lexicographic order of their $\BB$-expansions, the intervals $I_j$  are mutually disjoint.
For each $j =1,2,\ldots, p$ choose a rational number $x^{(j)} = \frac{p^j}{q_j}$ from the interval $\tfrac{1}{\delta^n}I_j$. 
The  $\BB$-expansion of $x$ is then of the form
\begin{equation}\label{eq:spectvar}
d_{\BB}(x^{(j)}) = 0^{pn+j-1}10^{pm-j} x^{(j)}_{p(n+m)+1}x^{(j)}_{p(n+m)+2}\cdots.
\end{equation}
Thanks to the choice of $n$, each interval  $\tfrac{1}{\delta^n}I_j $ is a subset of $\subset [0, \gamma)$, and therefore $d_{\BB}(x^{(j)})$ is eventually periodic. 
Without loss of generality we assume that the preperiod of $d_{\BB}(x^{(j)})$ is of length $pr>p(n+m)$ and the period is of length $ps>0$. Thus $d_{\BB}(x^{(j)})$ is of the form~\eqref{eq:poslperrozvojzlomky} and we have a matrix equation $M\vec{v}=\vec{0}$ for a matrix $M$ as in~\eqref{eq:matice M}, where for $i,j=1,\dots,p$ we have
$$
h_i^{(j)}(X) = (X^s-1)g_i^{(j)}(X) + f_i^{(j)}(X),
$$
and the polynomials $f_i^{(j)}$ and $g_i^{(j)}$ are as in~\eqref{eq:fg}. 

In order to show that $\delta$ is an algebraic number, by Lemma~\ref{l:omatici}, it suffices to verify that the determinant of $M$ is equal to $\det M = F(\delta)$ for some non-zero polynomial $F \in \Z[X]$.
For that, it suffices to show that in each row of the matrix $M(X)$, the degree of the polynomial at the diagonal is strictly larger than the degrees of polynomials at other positions in the row.

Since for every $j=1,\dots,p$ the digits $x_k^{(j)}$ of $d_{\BB}(x^{(j)})$ satisfy
$0\leq x_{k}^{(j)}< \beta_k$, and moreover
$$
x_{k}^{(j)}=
\begin{cases}
0 &\text{ for } 1\leq k\leq pn+j-1,\\
1 &\text{ for } k= pn+j,\\
0 &\text{ for } pn+j+1\leq k\leq p(n+m).
\end{cases},
$$
we can derive that 
\begin{equation}\label{eq:g}
  g_{i}^{(j)}(X) = \sum_{k=1}^r x_{p(k-1)+i}^{(j)}X^{r-k} = 
\begin{cases}
\displaystyle{\sum_{k=n+m+1}^r x_{p(k-1)+i}^{(j)}X^{r-k}} , &\text{for } i\neq j,\\[2mm]
\displaystyle{X^{r-n-1} + \sum_{k=n+m+1}^r x_{p(k-1)+i}^{(j)}X^{r-k}}  &\text{for } i= j.
\end{cases}
\end{equation}
For the degrees of the polynomials $h_i^{(j)}$ we therefore have
$$
\begin{aligned}
{\rm deg}\,h_i^{(j)} &\leq s+r-n-m-1, &\text{ for } i\neq j,\\ 
{\rm deg}\,h_i^{(i)} &= s+r-n-1.
\end{aligned}
$$
Moreover, the polynomial $h_i^{(i)}$ is monic for $i=1,\dots,p$, and its degree is strictly larger than the degree of polynomials $h_i^{(j)}$, $i\neq j$.

Formula for computation of the determinant of the matrix $M$ ensures that $\det M = F$ has the same leading coefficient as the product of polynomials on the diagonal of $M(\delta)$,
$$
q_1h^{(1)}_1(X)\, q_2h^{(2)}_2(X)\, \cdots  q_{p-1}h^{(p-1)}_{p-1}(X)\, \bigl(q_ph^{(p)}_p(X) - p_pX^r(X^s -1)\bigr). 
$$ 
Thus $\deg F = (s+r-n-1)(p-1)+s+r$ and the leading coefficient of $F$ is $-q_1q_2 \ldots q_{p-1}p_p$. The number $\delta$ is therefore a root of a non-zero polynomial  $F\in \mathbb{Z}[X]$ and hence $\delta$ is an algebraic number. 

\bigskip
Let us now demonstrate Item 2) of the statement. By Lemma~\ref{l:omatici}, it suffices to show that $M(\delta)$ is equal to $p-1$. We will ensure this fact by choosing a suitable parameter $m \in \N$. Let us stress that so-far our considerations used arbitrary positive integer $m\in \N$.  

We will prove that  for sufficiently large $m$ the submatrix 
\begin{equation}\label{eq:submatice}
\left(
\begin{array}{cccc}
q_1 h^{(1)}_1(\delta)& q_1 h^{(1)}_2(\delta)& \cdots & q_1 h^{(1)}_{p-1}(\delta)\\[2mm]
q_2 h^{(2)}_1(\delta)& q_2 h^{(2)}_2(\delta)& \cdots & q_2 h^{(2)}_{p-1}(\delta)\\
\vdots& \vdots & & \vdots \\[2mm]
q_{p-1} h^{(p-1)}_1(\delta)& q_{p-1} h^{(p-1)}_2(\delta)& \cdots & q_{p-1} h^{(p-1)}_{p-1}(\delta)\\
\end{array}\right)   
\end{equation}
of the matrix $M$ is strictly diagonally dominant and thus it is non-singular. 
For the $j$-th row, we need to verify that 
\begin{equation}\label{eq:diagdom}
|h_{j}^{(j)}(\delta)| > \Big| \sum_{i=1,i\neq j}^{p-1} 
h_i^{(j)}(\delta)\Big|.
\end{equation}
From~\eqref{eq:fg} and~\eqref{eq:g}, and the fact that coefficients of all polynomials are non-negative, we can deduce the following estimates on $f_i^{(j)}(\delta)$, $g_i^{(j)}(\delta)$.
We have 
$$
\begin{aligned}
0&\leq g_{i}^{(j)}(\delta)< 
\floor{\beta_i} \frac{\delta^{r-n-m}}{\delta-1} &\text{ for } i\neq j,\\
\delta^{r-n-1}&\leq g_{i}^{(j)}(\delta) <
\delta^{r-n-1}+\floor{\beta_i} \frac{\delta^{r-n-m}}{\delta-1}, &\text{ for } i= j,\\
0&\leq f_i^{(j)}(\delta) \leq \floor{\beta_i} \frac{\delta^s-1}{\delta-1}.    
\end{aligned}
$$ 
Thus $h_i^{(j)}(X) = (X^s-1)g_i^{(j)}(X) + f_i^{(j)}(X)$ satisfy
\begin{equation}\label{eq:diag}
|h_{j}^{(j)}(\delta)| \geq \delta^{r-n-1}(\delta^s-1)
\end{equation}
and
\begin{equation}\label{eq:mimodiag}
\begin{aligned}
\Big| \sum_{i=1,i\neq j}^{p-1} 
h_i^{(j)}(\delta)\Big| &< \frac{\delta^s-1}{\delta-1} 
(\delta^{r-n-m}+1)
\sum_{i=1,i\neq j}^{p-1} \floor{\beta_i} \\
&< 2\delta^{r-n-m}\frac{\delta^s-1}{\delta-1} 
\sum_{i=1,i\neq j}^{p-1} \floor{\beta_i}.
\end{aligned}
\end{equation}
It is therefore sufficient to show that the right hand-sides of~\eqref{eq:diag} and~\eqref{eq:mimodiag} satisfy
$$
\delta^{r-n-1}(\delta^s-1) > 2\delta^{r-n-m}\frac{\delta^s-1}{\delta-1} 
\sum_{i=1,i\neq j}^{p-1} \floor{\beta_i},
$$
or equivalently 
$$
\delta^{m-1}(\delta-1) > 2 
\sum_{i=1,i\neq j}^{p-1} \floor{\beta_i}.
$$
It is obvious that the latter is satisfied for sufficiently large $m$. Consequently, \eqref{eq:diagdom}
is true, the submatrix~\eqref{eq:submatice} is non-sigular and therefore using Item 2) of Lemma~\ref{l:omatici}, $\beta_i\in\Q(\delta)$ for $i=1,\dots,p$.
\end{proof}

\begin{lemma}\label{l:deltaAlgebraic2}  
Let $\BB = (\beta_1, \beta_2, \ldots, \beta_p)$ be an alternate base.
Suppose that there exists a constant $\gamma>0$ such that every rational number in $[0,\gamma)$ has eventually periodic $\BB$-expansion. Then 
$\delta=\prod_{i=1}^p\beta_i$ is an algebraic integer. 
\end{lemma}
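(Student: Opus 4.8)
The plan is to reuse the determinant construction from the proof of Lemma~\ref{l:deltaAlgebraic}, but to tune the chosen rationals so that the integer polynomial annihilating $\delta$ has leading coefficient coprime to a prescribed prime. By Lemma~\ref{l:deltaAlgebraic} we may already assume that $\delta$ is algebraic and $\beta_i\in\Q(\delta)$ for all $i$; let $g\in\Z[X]$ be a primitive polynomial generating the minimal polynomial of $\delta$ over $\Q$ and let $c\geq 1$ be its leading coefficient. Since $\delta$ is an algebraic integer exactly when $c=1$, it is enough to prove that $\ell\nmid c$ for every prime $\ell$.

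So fix a prime $\ell$. I would choose $n\in\N$ with $\delta^{-n}<\gamma$ and any $m\geq1$, and then repeat verbatim the selection of the numbers $x^{(j)}=p_j/q_j\in\tfrac1{\delta^n}I_j$ ($j=1,\dots,p$) made in the proof of Lemma~\ref{l:deltaAlgebraic}, imposing one additional constraint: each $x^{(j)}$ is an $\ell$-adic unit, i.e.\ when written in lowest terms neither $p_j$ nor $q_j$ is divisible by $\ell$. This is possible because every $\tfrac1{\delta^n}I_j$ is a non-empty interval of positive length, and rationals that are $\ell$-adic units are dense in $\R$. With these choices, the proof of Lemma~\ref{l:deltaAlgebraic} produces a non-singular matrix $M(X)\in\Z[X]^{p\times p}$ of the form~\eqref{eq:matice M} with $M(\delta)\vec v=\vec 0$, so $F(X):=\det M(X)$ is a non-zero polynomial in $\Z[X]$ with $F(\delta)=0$; and that same computation shows $\deg F=(s+r-n-1)(p-1)+s+r$ with leading coefficient equal to $-q_1q_2\cdots q_{p-1}p_p$. (Only $m\geq1$ enters this part; the diagonal-dominance argument, used there to get rank $p-1$, is not needed here, since $\beta_i\in\Q(\delta)$ is already known.) By the $\ell$-adic choice, $\ell$ divides none of $q_1,\dots,q_{p-1},p_p$, hence $\ell\nmid\mathrm{lc}(F)$.

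To finish, note that $g$ is primitive and, being the minimal polynomial of the root $\delta$, divides $F$ in $\Q[X]$; by Gauss's lemma $F=g\,h$ with $h\in\Z[X]$, so $c=\mathrm{lc}(g)$ divides $\mathrm{lc}(F)$ and therefore $\ell\nmid c$. As $\ell$ was arbitrary, $c=1$, so $\delta$ is a root of a monic integer polynomial and hence an algebraic integer. The only point requiring care is not an estimate but a matter of room: one must check that adding the $\ell$-adic-unit requirement still leaves admissible choices of $x^{(j)}$ inside the cylinders $\tfrac1{\delta^n}I_j$, so that the shape~\eqref{eq:spectvar} of the $\BB$-expansions, and hence the degree bookkeeping for $\det M$, is untouched; I expect this density argument to be the main thing to spell out.
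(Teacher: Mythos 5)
Your proof is correct, and it takes a genuinely different route from the paper's. The paper invokes Proposition~\ref{prop:HardyWright} (a consequence of the prime number theorem) to choose $n$ so that each cylinder $\frac1{\delta^n}I_j$ contains two unit fractions $\frac1{q_j}$ and $\frac1{\tilde q_j}$ with $q_j,\tilde q_j$ distinct primes; this yields two annihilating polynomials $F,\tilde F\in\Z[X]$ whose leading coefficients are coprime, and B\'ezout's lemma then produces a monic integer polynomial vanishing at $\delta$. You instead fix an arbitrary prime $\ell$, pick the $x^{(j)}$ to be $\ell$-adic units, obtain a single annihilating polynomial $F_\ell\in\Z[X]$ with $\ell\nmid\mathrm{lc}(F_\ell)$, and close via Gauss's lemma: the primitive integer generator $g$ of the minimal ideal of $\delta$ divides $F_\ell$ in $\Z[X]$, so $\mathrm{lc}(g)$ divides $\mathrm{lc}(F_\ell)$, hence $\ell\nmid\mathrm{lc}(g)$; letting $\ell$ vary forces $\mathrm{lc}(g)=1$. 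Your route dispenses with the prime number theorem (only an elementary density statement is needed) and thus is lighter machinery-wise; you are also right that only $m\ge 1$ is needed here, since the diagonal-dominance tuning of $m$ in the proof of Lemma~\ref{l:deltaAlgebraic} serves only the rank-$(p-1)$ claim, which you do not use. The one point you flag --- that the $\ell$-adic-unit constraint leaves room inside the cylinders so that the prefix~\eqref{eq:spectvar} and the degree bookkeeping are undisturbed --- is indeed the only thing to spell out, and it is routine: each $\frac1{\delta^n}I_j$ is an interval of positive length, and fractions $\frac{p}{q}$ in lowest terms with $\ell\nmid pq$ are dense in $\R$ (e.g.\ let $q$ range over primes $\ne\ell$ and $p$ over residues coprime to $\ell$).
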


For the proof of this lemma, we will 
use the following statement which follows from the prime number theorem, see~\cite[p. 494]{HardyWright2008}.

\begin{proposition}[\cite{HardyWright2008}]\label{prop:HardyWright}
%\todo[inline]{Z: Hardy and Wright 6th edition p 494 consequence of the prime number theorem.}
Let $\varepsilon >0$. Then there exists $A_1 \in \N$ such that for any $A > A_1$,  the interval $\big(A, A(1+\varepsilon)\big)$ contains at least one rational prime. \end{proposition}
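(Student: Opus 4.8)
The plan is to deduce this directly from the prime number theorem in its classical form $\pi(x)\sim x/\log x$, where $\pi(x)$ denotes the number of primes not exceeding $x$. For a fixed $\varepsilon>0$, the interval $\bigl(A,A(1+\varepsilon)\bigr)$ certainly contains a prime once the counting difference $\pi\bigl(A(1+\varepsilon)\bigr)-\pi(A)$ is large enough (with a minor adjustment at the right endpoint, discussed below), so the whole task reduces to showing that this difference tends to $+\infty$ as $A\to+\infty$.

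First I would insert the asymptotics of the prime number theorem,
$$
\pi\bigl(A(1+\varepsilon)\bigr)=\bigl(1+o(1)\bigr)\frac{A(1+\varepsilon)}{\log\bigl(A(1+\varepsilon)\bigr)},\qquad
\pi(A)=\bigl(1+o(1)\bigr)\frac{A}{\log A},
$$
valid as $A\to+\infty$. Since $\log\bigl(A(1+\varepsilon)\bigr)=\log A+\log(1+\varepsilon)$ and $\varepsilon$ is a fixed constant, one has $\log\bigl(A(1+\varepsilon)\bigr)=\bigl(1+o(1)\bigr)\log A$, so both prime counts can be written with the common denominator $\log A$, giving
$$
\pi\bigl(A(1+\varepsilon)\bigr)-\pi(A)=\frac{A\bigl(1+\varepsilon)-A+o(A)}{\log A}=\bigl(1+o(1)\bigr)\frac{\varepsilon A}{\log A}\ \xrightarrow[A\to\infty]{}\ +\infty .
$$
In particular there exists $A_1\in\N$ such that $\pi\bigl(A(1+\varepsilon)\bigr)-\pi(A)\geq 1$ for every $A>A_1$.

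To conclude, I would remove the endpoint ambiguity: $\pi\bigl(A(1+\varepsilon)\bigr)-\pi(A)$ counts primes $p$ with $A<p\le A(1+\varepsilon)$, whereas the statement asks for the open interval. It is enough to run the argument above with $\varepsilon$ replaced by $\varepsilon/2$, so that any prime $p$ with $A<p\le A(1+\varepsilon/2)$ automatically satisfies $A<p<A(1+\varepsilon)$; the corresponding threshold $A_1$ then yields the claim. I do not expect a genuine obstacle here: the only real input is the prime number theorem itself, which we take as known — this is exactly what the reference \cite[p.~494]{HardyWright2008} supplies — and everything else is routine asymptotic bookkeeping. The one point worth flagging is that Chebyshev-type two-sided bounds $c_1x/\log x\le\pi(x)\le c_2x/\log x$ do \emph{not} suffice, since positivity of the difference would then force $c_2/c_1<1+\varepsilon$, which fails for small $\varepsilon$; the full strength of $\pi(x)\log x/x\to 1$ is genuinely needed.
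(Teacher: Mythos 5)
Your proof is correct and is precisely the derivation from the prime number theorem that the paper has in mind: it simply cites \cite[p.~494]{HardyWright2008} with the remark that the proposition ``follows from the prime number theorem,'' which is exactly the asymptotic comparison $\pi\bigl(A(1+\varepsilon)\bigr)-\pi(A)\sim \varepsilon A/\log A\to\infty$ that you carry out. Your closing observation that Chebyshev-type bounds alone would not suffice for arbitrary $\varepsilon>0$ is a nice sanity check and explains why the full strength of the PNT is invoked.
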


\begin{proof}[Proof of Lemma~\ref{l:deltaAlgebraic2}] 
In the proof of Lemma \ref{l:deltaAlgebraic} we set for $x^{(j)} = \frac{p_j}{q_j} $ any rational numbers from the interval $\frac{1}{\delta^n} I_j$. With this choice we have derived that $\delta$ is a root of a non-zero  integer polynomial $F$ with leading coefficient equal to $-q_1q_2\cdots q_p p_p$. The only condition on the fixed index $n\in \N$ was that $\frac{1}{\delta^n}  < \gamma$. Now we show that a more meticulous choice of $n$ ensures that the interval $\frac{1}{\delta^n} I_j$ contains two fractions $x^{(j)} = \frac{1}{q_j}$  and  $\tilde{x}^{(j)} = \frac{1}{\tilde{q}_j}$, where $q_j$ and $\tilde{q}_j$ are mutually distinct primes. With this choice of two $p$-tuples of rational numbers, we obtain by the same procedure as before two polynomials with root $\delta$, say  $F$  and $\tilde{F}$ from $\Z[X]$, the first one with leading coefficient $-q_1q_2\cdots q_p$, the second one with leading coefficient $-\tilde{q}_1 \tilde{q}_2 \cdots \tilde{q}_p$. Since $I_j$ are mutually disjoint,  $q_1,\dots,q_{p},\tilde{q}_1,\dots,\tilde{q}_{p}$ are distinct primes. Hence, the leading coefficients of $F$ and $\tilde F$ are coprime. By Bézout's lemma, there exists a monic polynomial with integer coefficients with root $\delta$, and thus $\delta$ is an algebraic integer.

\medskip
For the proof of Lemma~\ref{l:deltaAlgebraic2} it is therefore sufficient to demonstrate how to find an integer $n\in \N$ so that each of the intervals $\frac{1}{\delta^n} I_j$, $j=1,\dots,p$, contains two fractions $\frac{1}{q_j}$  and $\frac{1}{\tilde{q}_j}$, such that $q_j$ and $\tilde{q}_j$ are distinct primes. For this, we use Proposition~\ref{prop:HardyWright} stated above.
\medskip 

Denote by $\ell_j$ and $r_j$ the left and right end-points of the interval $I_j$ defined in \eqref{eq: Iintrvaly}, respectively. Since in the lexicographic order the prefix $0^{j}10^{pm-j-1}$ defining the interval $I_{j+1}$ is smaller than the corresponding prefix for $I_{j}$, obviously,  
$\ell_p < r_p < \ell_{p-1} < r_{p-1}< \cdots < \ell_1 < r_1$ and also $\frac{1}{r_1}< \frac{1}{\ell_1}< \cdots < \frac{1}{r_p}< \frac{1}{\ell_p}$.  Set $\varepsilon$ so that 
$$
(1+\varepsilon)^2 = \min \Bigl\{ \frac{r_j}{\ell_j}\ : \  j =1,2,\ldots, p\Bigr\}
$$
To such $\varepsilon$ we find by Proposition~\ref{prop:HardyWright} the number $A_1$.  Now we choose $n\in \N$ such that besides the inequality $\frac{1}{\delta^n}  < \gamma$ we also have  $A_1< \frac{\delta^n}{r_1}$. Each interval $K_j:=\Bigl(\frac{\delta^n}{r_j}, \frac{\delta^n}{\ell_j}\Bigr) $ has its left end-point larger than $A_1$ and the ratio of its right and left end-points satisfies $\frac{r_j}{\ell_j} \geq (1+\varepsilon)^2$.  Proposition~\ref{prop:HardyWright} ensures that $K_j$ contains two distinct primes, say $q_j$ and $\tilde{q}_j$. 
Since $q_j, \tilde{q}_j \in K_j$ we have $\frac{1}{q_j}, \frac{1}{\tilde{q}_j} \in  \bigl(\frac{\ell_j}{\delta^n}, \frac{r_j}{\delta^n}\bigr) = \frac{1}{\delta^n}I_j$ as we wanted to show. 
\end{proof}

\begin{lemma}\label{l:deltaAlgebraic3}  
Let $\BB = (\beta_1, \beta_2, \ldots, \beta_p)$ be an alternate base.
Suppose that there exists a constant $\gamma>0$ such that every rational number in $[0,\gamma)$ has eventually periodic $\BB$-expansion. Then 
$\delta=\prod_{i=1}^p\beta_i$ is a Pisot or a Salem number.
\end{lemma}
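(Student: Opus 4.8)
The goal is to prove that every Galois conjugate $\delta'$ of $\delta$ satisfies $|\delta'|\le 1$. Combined with Lemmas~\ref{l:deltaAlgebraic} and~\ref{l:deltaAlgebraic2} — which already give that $\delta>1$ is an algebraic integer with $\beta_i\in\Q(\delta)$ — this yields that $\delta$ is a Pisot number (all conjugates strictly inside the unit disc) or a Salem number (some conjugate of modulus $1$). So fix a conjugate $\delta'$, let $\sigma\colon\Q(\delta)\to\C$ be the corresponding embedding, and assume for contradiction that $|\delta'|>1$.

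I would first introduce a ``formal conjugate'' of the $\delta$-representation coming from~\eqref{eq:uprava}. For $z\in[0,1)$ write $z=\sum_{k\ge 1}d_k(z)\,\delta^{-k}$ with digits $d_k(z)$ in the finite set $\mathcal{D}\subseteq\Q(\delta)$ of~\eqref{eq:Dabeceda1} (so $d_1(z)$ is determined by the first $p$ digits of $d_\BB(z)$), and define $\Phi\colon[0,1)\to\C$ by $\Phi(z)=\sum_{k\ge 1}\sigma(d_k(z))\,\delta'^{-k}$. Since $|\delta'|>1$ and $\mathcal{D}$ is finite, this series converges and $|\Phi(z)|\le M$ where $M=\max_{d\in\mathcal{D}}|\sigma(d)|\,/\,(|\delta'|-1)$. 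Three facts about $\Phi$ will be used: (i) if $d_\BB(z)$ is eventually periodic then $z\in\Q(\delta)$ and $\sigma(z)=\Phi(z)$, which follows by applying $\sigma$ to the closed form of $z$ in terms of its eventually periodic $\delta$-digits, exactly as in the derivation of~\eqref{eq:expevper} (legitimate because $\delta'^{\,s}\ne 1$); (ii) $\Phi(Tz)=\delta'\Phi(z)-\sigma(d_1(z))$, where $T\colon[0,1)\to[0,1)$ denotes the $p$-fold greedy transformation, $Tz=\delta z-d_1(z)$, whose $\delta$-expansion is $d_2(z)d_3(z)\cdots$; (iii) $\Phi$ is continuous at every $z$ which is not an endpoint of a $\delta$-cylinder, hence off a countable set — here $|\delta'|>1$ is what makes the tail of the defining series uniformly small once two points have $\delta$-expansions agreeing on a long common prefix.

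The heart of the proof is that, because $\Q\cap[0,\gamma)\subseteq\mathrm{Per}(\BB)$, the set $\mathrm{Per}(\BB)$ meets every $\delta$-cylinder densely, and this clashes with the a priori bound $|\Phi|\le M$ on $\mathrm{Per}(\BB)$. Concretely: whenever $\delta^{-k}<\gamma$, a rational $x\in(0,\delta^{-k})$ has $d_\BB(x)$ starting with $0^{pk}$ (as $a_1=\dots=a_{pk}=0$ is equivalent to $\delta^{k}x<1$), the tail $r_{pk}(x)=\delta^{k}x$ lies in $(0,1)$ and in $\mathrm{Per}(\BB)$, and by (i) together with $\sigma(x)=x$ one gets $\Phi(r_{pk}(x))=\sigma(r_{pk}(x))=\delta'^{\,k}x=r_{pk}(x)\cdot(\delta'/\delta)^{k}$. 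Varying $x$ inside $(0,\delta^{-k})$ and letting $k\to\infty$, the points $\rho_k:=r_{pk}(x)$ can be steered to any target $y\in(0,1)$, with $\Phi(\rho_k)=\rho_k\,(\delta'/\delta)^{k}$. Now: if $|\delta'|>\delta$, take the $\rho_k$ bounded away from $0$; then $|\Phi(\rho_k)|=\rho_k|\delta'/\delta|^{k}\to\infty$, contradicting $|\Phi|\le M$. If $|\delta'|\le\delta$, fix a continuity point $y\in(0,1)$ of $\Phi$ and pass to a subsequence of $k$'s on which $(\delta'/\delta)^{k}$ tends to an accumulation value $u$; choosing $\rho_k\to y$ along it, continuity gives $\Phi(y)=yu$. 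When $|\delta'|<\delta$ the only accumulation value is $u=0$, so $\Phi$ vanishes at every continuity point, hence a.e.; since $T$ is non-singular (piecewise affine with nonzero slope), $\Phi\circ T=0$ a.e.\ too, and then (ii) forces $\sigma(d_1(z))=0$, i.e.\ $d_1(z)=0$, for a.e.\ $z$ — impossible, since $d_1(z)=0$ only on $[0,1/\delta)$. When $|\delta'|=\delta$, the number $\delta'/\delta$ lies on the unit circle and differs from $1$, so $(\delta'/\delta)^{k}$ has two distinct accumulation values $u\ne u'$; running the construction along both subsequences yields $\Phi(y)=yu=yu'$, whence $y=0$, contradicting $y\in(0,1)$. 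In every case we have a contradiction, so $|\delta'|\le 1$, and $\delta$ is a Pisot or a Salem number.

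The point I expect to require care is property (iii) and the density input. The embedding $\sigma$ is totally discontinuous on $\Q(\delta)$, so one cannot simply ``take limits of conjugates''; the device is that $\Phi$ is a genuinely continuous function off a countable set which \emph{coincides} with $\sigma$ on $\mathrm{Per}(\BB)$, and that $\mathrm{Per}(\BB)$ is dense in $[0,1)$ through the rescaled remainders $r_{pk}(x)=\delta^{k}x$ of small rationals $x$. Pinning down these two statements — continuity of $\Phi$ away from cylinder endpoints, the identity $r_{pk}(x)=\delta^{k}x$ for $x<\delta^{-k}$, and the measure-theoretic step ``$\Phi\equiv 0$ a.e.\ $\Rightarrow$ $\Phi\circ T\equiv 0$ a.e.'' — is where the work lies; the trichotomy on $|\delta'|$ versus $\delta$ is then short.
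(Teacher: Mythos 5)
Your proof is correct, but it follows a noticeably different route from the paper's. The paper's argument is a short direct computation: it fixes $n$ with $\delta^{-(n-1)}<\gamma$ and $\delta^n\neq\eta^n$, takes for each $m$ a rational $x^{(m)}$ whose $\BB$-expansion begins $0^{pn-1}10^{pm}$, applies the embedding $\psi$ (fixing the rational $x^{(m)}$) to the closed form coming from the eventually periodic tail, and subtracts; the tail contributions are $O(\delta^{-m-n})+O(|\eta|^{-m-n})\to 0$, while the main terms leave the fixed positive quantity $|\delta^{-n}-\eta^{-n}|$, a contradiction without any case distinction on $|\eta|$ versus $\delta$. Your proof instead constructs a conjugate coding map $\Phi(z)=\sum_k\sigma(d_k(z))\delta'^{-k}$, proves it bounded, continuous off cylinder endpoints, equal to $\sigma$ on $\mathrm{Per}(\BB)$, and equivariant for the $p$-fold greedy map $T$, then exploits the density of the scaled rationals $\rho_k=\delta^kx$ together with a trichotomy on $|\delta'|$ versus $\delta$, finishing the subcase $|\delta'|<\delta$ with a measure-theoretic argument via $T$. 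This is closer to Schmidt's original dynamical method and is certainly valid; what the paper's version buys is brevity and uniformity (no trichotomy, no measure theory, no need for the continuity-off-a-countable-set device), while your version makes the structural picture — a continuous conjugate of the coding that must collide with the field embedding on a dense set — explicit. One small point worth stating explicitly: you should say at the outset that $\delta'$ is a conjugate \emph{distinct from} $\delta$ (equivalently that $\sigma$ is non-identical), as the case $|\delta'|=\delta$ relies on $\delta'/\delta\neq 1$; the paper handles the analogous delicacy by choosing $n$ with $\delta^n\neq\eta^n$.
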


\begin{proof}
By Lemma~\ref{l:deltaAlgebraic2}, $\delta$ is an algebraic integer. It remains to show that no algebraic conjugate of $\delta$ is in modulus strictly larger than 1, i.e. $\delta$ is a Pisot or a Salem number. Assume for the contradiction that there exists an algebraic conjugate $\eta$ of $\delta$ distinct from $\delta$, which has modulus $|\eta| >1$. 

Fix now $n\in \N$ so that $\frac{1}{\delta^{n-1}} < \gamma$ and $\delta^n \neq \eta^n$. 
Then for every positive integer $m$ the set    
$$
I(m) = \{ x \in [0,1) : d_{\BB}(x) \ \text{has a prefix } \  0^{pn-1}10^{pm}  \} 
$$ 
is an interval and every $x \in I(m)$ satisfies $x < \frac{1}{\delta^{n-1}} < \gamma$. Choose    $x^{(m)}\in I(m) \cap \mathbb{Q}$.  The $\BB$-expansion of  $x^{(m)}$ is eventually periodic and has the form
$$
d_{\BB}(x^{(m)}) = 0^{pn-1}10^{pm} x_{p(n+m)+1}^{(m)}x_{p(n+m)+2}^{(m)}\cdots.
$$
Hence, 
\begin{equation}\label{eq:alfy1}
x^{(m)} = \frac{1}{\delta^n} + \frac{1}{\delta^{n+m}}\sum_{k=1}^{+\infty}\frac{d^{(m)}_{k}}{\delta^k} , \quad \text{with} \ \ {d^{(m)}_{k}} \in \mathcal{D} \subset \mathbb{Q}(\delta)\,.
\end{equation} 
As the expansion is eventually periodic, we can apply to the latter relation the field isomorphism $\psi$ induced by $\psi(\delta) = \eta$  to obtain 
\begin{equation}\label{eq:alfy2}
x^{(m)} = \frac{1}{\eta^n} + \frac{1}{\eta^{m+n}}\sum_{k=1}^{+\infty}\frac{\psi\bigl(d^{(m)}_{k}\bigr)}{\eta^k} \in  \mathbb{Q}(\eta)\,.
\end{equation}
Since the digit set $\mathcal{D}$ is finite, there exist constants $K$ and $\widetilde{K}$ such that $|d| \leq K$ and $|\psi(d)|\leq \widetilde{K}$ for every $d \in \mathcal{D}$. The infinite sums in \eqref{eq:alfy1} and \eqref{eq:alfy2} can therefore be bounded by  
$$
\Bigl|\  \sum_{k=1}^{+\infty}\frac{d_{k}}{\delta^k}\ \Bigr| \leq \frac{{K}}{|\delta| - 1}\qquad \text{and} \qquad 
\Bigl| \ \sum_{k=1}^{+\infty}\frac{\psi(d_{k})}{\eta^k}\ \Bigr| \leq \frac{\tilde{K}}{|\eta| - 1}\,.
$$
Subtracting \eqref{eq:alfy1} and \eqref{eq:alfy2} with the use of these estimates, we derive
$$
\Bigl|\frac{1}{\delta^n} - \frac{1}{\eta^n}\Bigr| \ \leq  \ \frac{1}{|\delta|^{m+n}} \ \frac{{K}}{|\delta| -1}\  +  \ \frac{1}{|\eta|^{m+n}} \ \frac{\tilde{K}}{|\eta| -1}
$$
for every sufficiently large $m \in \N$.
The right hand side of the inequality tends to $0$ with $m$ increasing to the infinity, whereas the left hand side is a positive number independent of $m$. Hence by contradiction, a conjugate $\eta$ of $\delta$ in modulus greater than 1 cannot exist. 
\end{proof}

\begin{proof}[Proof of Theorem~A]
It suffices to combine statements of Lemmas~\ref{l:deltaAlgebraic} and~\ref{l:deltaAlgebraic3}.
\end{proof}

%%%%%%%%%%%%%%%%%%%%%%%%%%%%%%%%%%%%%%%%%%%%%%%%%%%%%%%
\section{Proof of Theorem~\ref{thm:necessary}}\label{sec:necessary}

Theorem~B gives a necessary condition for an alternate base $\BB$ so that it satisfied Property (PP). Its proof is an adjustment of arguments used by Akiyama~\cite{A98} for the case of alternate bases.

\begin{lemma}\label{l:deltaAlgebraic4}  
Let $\BB = (\beta_1, \beta_2, \ldots, \beta_p)$ be an alternate base.
Suppose that there exists a constant $\gamma>0$ such that every rational number in $[0,\gamma)$ has purely periodic $\BB$-expansion. Then 
\begin{enumerate}
    \item[1)] $\delta$ is an algebraic unit, $\beta_i\in\Q(\delta)$ for  $i=1,\dots,p$, and 
    \item[2)] for any non-identical embedding $\psi:\Q(\delta)\to\C$, the vector $\big(\psi(\beta_1),\dots,\psi(\beta_p)\big)$ is not positive. 
\end{enumerate}
\end{lemma}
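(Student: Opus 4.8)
The plan is to follow the strategy of Lemma~\ref{l:deltaAlgebraic3}, but this time using the hypothesis of \emph{pure} periodicity and extracting two pieces of information: that $\delta$ is a unit (not merely an algebraic integer), and the sign obstruction on $\big(\psi(\beta_1),\dots,\psi(\beta_p)\big)$. First I would note that pure periodicity implies eventual periodicity, so by Lemma~\ref{l:deltaAlgebraic} and Lemma~\ref{l:deltaAlgebraic2} we already know $\delta$ is an algebraic integer and $\beta_i\in\Q(\delta)$ for all $i$. To upgrade ``integer'' to ``unit'', I would exploit formula~\eqref{eq:Xperiodic}: for a rational $x=\frac{a}{b}\in[0,\gamma)$ with purely periodic $\BB$-expansion of period $ps$, we get $x(\delta^s-1)=\big(f_1(\delta),\dots,f_p(\delta)\big)\vec v$ with the $f_i\in\Z[X]$ of degree $\le s-1$. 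Writing this in the $\delta$-base form $x(\delta^s-1)=\sum_{k=1}^s d_k\delta^{s-k}$ with $d_k\in\mathcal D$, and recalling $d_k=(a_k,\dots)\vec v$ with $0\le a_k<\beta_k$, the point is that the digit at the very start of the period is positive (by choosing $x$ so that the period begins with a $1$ in a prescribed slot, as in~\eqref{eq:spectvar}), hence $d_1\ne 0$. Then $x(\delta^s-1)=d_1\delta^{s-1}+(\text{lower order in }\delta)$, and thinking of $\delta$ as a root of its minimal polynomial, the norm of $\delta^s-1$ over $\Q$ divides (up to the rational factor coming from $x$ and the denominators in $\mathcal D$) a controlled quantity; more precisely, if $\delta$ were not a unit there would be a prime $\mathfrak p$ with $v_{\mathfrak p}(\delta)>0$, and then $v_{\mathfrak p}(\delta^s-1)=0$ forces $v_{\mathfrak p}$ of the right-hand side to be $0$, contradicting that every term of $\sum d_k\delta^{s-k}$ lies in an ideal where the leading term $d_1\delta^{s-1}$ has $v_{\mathfrak p}>0$ while the constant term need not. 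I would make this rigorous by instead using the multiplicative structure: choose $x=\frac{1}{\delta^s-1}\cdot(\text{something})$ is not available directly, so the cleaner route is to mimic Akiyama's argument, picking a rational $x$ whose purely periodic expansion has period exactly $ps$ and a single nonzero digit, giving $x(\delta^s-1)=c\,\delta^{s-1}$ for some $c\in\Q(\delta)^\times$, whence $\delta^{s-1}=\frac{x(\delta^s-1)}{c}$ is a unit times a unit, forcing $\delta$ to be a unit.

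For Item~2), the sign obstruction, I would argue by contradiction: suppose some non-identical embedding $\psi$ with $\psi(\delta)=\eta$ has $\psi(\beta_i)>0$ for every $i$, equivalently $\psi(\vec v)$ has all positive entries. Take any rational $x\in[0,\gamma)\cap\Q$, $x>0$; it has a purely periodic expansion, so by~\eqref{eq:Xperiodic}, $x(\delta^s-1)=\big(f_1(\delta),\dots,f_p(\delta)\big)\vec v$ with the $f_i$ having non-negative integer coefficients. Applying $\psi$ gives $x(\eta^s-1)=\big(f_1(\eta),\dots,f_p(\eta)\big)\psi(\vec v)$. Now here is the subtle part: I want to bound $|x(\eta^s-1)|$ from below by something growing, or from above by something forcing a contradiction. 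Since the period can be taken as long as we like (by choosing $x$ with a long pre-agreed periodic pattern, or simply noting different rationals give different $s$), and the $f_i$ have non-negative coefficients of bounded size (bounded by $\floor{\beta_i}$ in each slot), if $|\eta|<1$ then $|f_i(\eta)|\le \floor{\beta_i}\sum_{k\ge0}|\eta|^k$ is bounded, so $|x(\eta^s-1)|\to |x|$ is bounded and nothing goes wrong; the contradiction must instead come from $|\eta|\ge 1$ — but $\delta$ is Pisot or Salem (Lemma~\ref{l:deltaAlgebraic3}), so $|\eta|\le 1$, and in the Salem case possibly $|\eta|=1$. I therefore expect that the real mechanism is different: positivity of $\psi(\vec v)$ should be combined with the representation $x=\sum_{k\ge1} d_k\delta^{-k}$ applied under $\psi$, $x=\sum_{k\ge1}\psi(d_k)\eta^{-k}$, where $\psi(d_k)=(a^{(k)}_1,\dots,a^{(k)}_p)\psi(\vec v)>0$ whenever the digit block is nonzero. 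If $|\eta|<1$ the series $\sum \psi(d_k)\eta^{-k}$ diverges unless all but finitely many digits vanish — but a purely periodic expansion with a nonzero period never terminates — contradiction, unless $x=0$. This is the clean argument: for $0<x<\gamma$ rational, purely periodicity gives infinitely many nonzero digit-blocks $d_k$, each with $\psi(d_k)$ a positive real (bounded below by a fixed positive constant over the finitely many possible nonzero blocks), and $\sum_{k\ge1}\psi(d_k)\eta^{-k}$ cannot converge when $|\eta|<1$, contradicting that it must equal the fixed real number $x$.

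The main obstacle is handling the Salem case, where $|\eta|=1$ is possible and the divergence argument above fails (the terms $\psi(d_k)\eta^{-k}$ have modulus bounded away from $0$ but do not blow up). In that case I would argue that $\sum_{k\ge1}\psi(d_k)\eta^{-k}$ still cannot converge to $x$: writing out the purely periodic structure, $\psi(d_k)$ is itself eventually periodic in $k$ with the same period, and $\eta^{-k}$ runs over a bounded set, so the partial sums do not form a Cauchy sequence unless $\psi(d_k)\eta^{-k}\to 0$, which is impossible since $|\psi(d_k)\eta^{-k}|=|\psi(d_k)|$ is bounded below by a positive constant along the infinitely many $k$ where $d_k\ne 0$. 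Thus in all cases $|\eta|\le 1$ with $\psi(\vec v)>0$ forces $x=0$, contradicting $0<x<\gamma$. I would also double-check the unit claim by an independent route: since $\delta$ is a root of the polynomial $F$ with leading coefficient $-q_1\cdots q_p p_p$ built in Lemma~\ref{l:deltaAlgebraic}, and of the companion $\tilde F$, and one can additionally arrange (by choosing the rationals $x^{(j)}$ with numerator $p_j=1$ and the periodic tail beginning with a single $1$) that the constant term of the minimal polynomial of $\delta$ divides $\pm 1$; combining with Lemma~\ref{l:deltaAlgebraic2} that $\delta$ is already an algebraic integer then yields that $\delta$ is a unit. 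Finally I would assemble: Lemma~\ref{l:deltaAlgebraic3} gives Pisot or Salem, the above gives unit and the sign condition, which is exactly the statement of Lemma~\ref{l:deltaAlgebraic4}, and hence of Theorem~\ref{thm:necessary}.
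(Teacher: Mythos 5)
Your proposal has genuine gaps in both parts, and the mechanism you reach for is different from (and weaker than) the one that actually works.

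\textbf{Item 1) — unit.} Your ``single nonzero digit in the period'' idea is not viable: you do not get to prescribe in advance that a chosen rational $x$ will have a purely periodic expansion whose period consists of a single nonzero digit-block, and even if you had $x(\delta^s-1)=c\,\delta^{s-1}$ with $c\in\Q(\delta)^\times$, this is just an equality in $\Q(\delta)^\times$ and says nothing about $\delta$ being a unit ($x$ is merely rational, $c$ is merely a nonzero field element). Your ``independent route'' via the matrix $M$ of Lemma~\ref{l:deltaAlgebraic} controls the leading coefficient of $F$, but the constant term of $F(X)$ involves the last digits of the chosen expansions, which you cannot arrange at will; so the claim that you can ``arrange the constant term to divide $\pm1$'' is unsupported. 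The paper's argument is of a different type: it picks one very specific rational $x=\tfrac{1}{q\Delta^n}$, where $\Delta=|\mathrm{Norm}(\delta)|$ and $q$ clears the denominators of $\mathcal{D}$, and $n$ is large; the crucial trick is to also take $x<\delta^{-D}$ so that the first $D$ digit-blocks vanish, which makes the resulting polynomial $X^s-\Delta^nF(X)-1$ \emph{monic}, and then the observation that the norm of $\delta$ divides the constant term $\equiv -1\pmod{\Delta}$ forces $\Delta=1$. Without this specific choice of $x$ there is no handle on the constant term.

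\textbf{Item 2) — sign condition.} The ``clean argument'' you settle on applies $\psi$ term by term to the infinite series $x=\sum_{k\ge1}d_k\delta^{-k}$ to get $x=\sum_{k\ge1}\psi(d_k)\eta^{-k}$. This step is invalid: a non-identical field embedding $\psi:\Q(\delta)\to\C$ is not continuous, so it does not commute with convergent infinite sums. (Also, you wrote the exponent as $-k$ rather than $s-k$; with $|\eta|<1$ the series you write diverges — but that tells you only that $\psi$ was misapplied, not that a contradiction has been reached.) The Salem anxiety is also a red herring: since you assume $\psi(\beta_i)>0$ for all $i$, you have $\eta=\psi(\delta)=\prod\psi(\beta_i)>0$; combined with Pisot/Salem and $\eta\ne\delta$ one gets $\eta\in(0,1)$ strictly, so there is no boundary case. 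The correct mechanism, which requires pure periodicity in an essential way, is to apply $\psi$ to the \emph{finite} algebraic identity $x(\delta^s-1)=\sum_{k=1}^s d_k\delta^{s-k}$ coming from~\eqref{eq:Xperiodic}. Then the left-hand side $x(\eta^s-1)$ is strictly negative (as $\eta\in(0,1)$ and $x>0$), while the right-hand side $\sum_k\psi(d_k)\eta^{s-k}$ is $\ge 0$ because each $\psi(d_k)\ge0$ and $\eta>0$; contradiction. Your instinct to exploit the positivity of $\psi(\vec v)$ was right, but the pivot has to be a sign comparison on a finite identity, not a convergence argument on an infinite series.
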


\begin{proof}
Assume that the $\BB$-expansion of every rational number in $[0,\gamma)$ is purely periodic. From Lemma~\ref{l:deltaAlgebraic2}, we know that $\delta=\prod_{i=1}^p \beta_i >1$ is an algebraic integer, say of degree $D$, and thus its norm ${\rm Norm}(\delta)$ is an integer. 
Since  $\beta_i \in \Q(\delta)$, and the alphabet $\mathcal{D} \subset \Q(\delta)$ is finite, there exists a positive integer $q$ such that every digit $d \in \mathcal{D}$ can be written in the form  $d=\frac1q  h^{(d)}(\delta)$, where $h^{(d)} \in \Z[X]$ with ${\rm deg\,}h^{(d)} \leq D-1$.   

In order to show that $\delta$ is an algebraic unit, we need to verify that ${\rm Norm}(\delta)=\pm 1$. Assume the contrary, i.e.\ $\Delta:=|{\rm Norm}(\delta)| \geq 2$.  This allows us to find an 
 $n \in \N$ such that the constant $\gamma>0$ from the definition of Property (PP) satisfies
 $$
 \min\{\gamma, \delta^{-D}\} > \frac{1}{q\,\Delta^n} =:x
 $$
Since  $x \in [0, \gamma)\cap \mathbb{Q}$ and $x < \delta^{-D}$,   the $\BB$-expansion of $x$ is 
$$
d_{\BB}(x)=(0^{pD}x_{pD+1}x_{pD+2}\cdots x_{ps})^\omega
$$  
for some $s \in \N$.  
By \eqref{eq:Xperiodic}, we have
\begin{equation}\label{eq:x}
 x(\delta^s - 1) = \sum_{k=D+1}^s d_k\delta^{s-k} = 
 \sum_{k=D+1}^s \frac1{q}h^{(d_k)}(\delta)\delta^{s-k}\,,
\end{equation}
i.e.\ we have for some $ F \in  \Z[X]$ of degree ${\rm deg}(F)\leq s-2$ that $x(\delta^s - 1)=\frac1qF(\delta)$.
Substituting the value of $x$, the latter implies that 
$\delta^s - 1 =  \Delta^n \,F(\delta)$. In other words,  $\delta$ is a root of the polynomial $X^s - \Delta^n F(X) -1 \in\Z[X]$.  The constant term of the polynomial is equal to $-1 \mod \Delta$. As the norm of a root of any integer polynomial divides its constant term, we derive that  $\Delta$ divides $-1$ and hence $\Delta = |{\rm Norm}(\delta)| = 1$, which is a contradiction, proving Item 1).

We will prove Item 2) by contradiction. Assume that there exists a non-identical embedding $\psi:\Q(\delta)\to\C$ such that $\psi(\beta_i)>0$ for every $i=1,\dots,p$. We therefore have $\psi(d)>0$ for any digit $d\in{\mathcal D}$. 
Applied on the number $\delta$, we obtain $\psi(\delta)=\eta>0$. Since $\delta$ is a Pisot or a Salem number, we have  $\eta\in(0,1)$. 

Applying $\psi$ on~\eqref{eq:x} gives
$$
(\eta^s-1)x = \sum_{k=D+1}^s \psi(d_k)\eta^{s-k}, 
$$
This is a contradiction, since the number on the left hand-side is negative, whereas the right hand-side is $\geq 0$.
\end{proof}

\begin{proof}[Proof of Theorem~B]
    It suffices to combine the statements of Lemma~\ref{l:deltaAlgebraic}, Lemma~\ref{l:deltaAlgebraic3} and Lemma~\ref{l:deltaAlgebraic4}.
\end{proof}

%%%%%%%%%%%%%%%%%%%%%%%%%%%%%%%%%%%%%%%%%%%%%%%%%%%%%%%
\section{Proof of Theorem~\ref{thm:sufficient}}\label{sec:sufficient}

Theorem~C expresses a sufficient condition for an alternate base $\BB$ to satisfy Property (PP).
Two auxiliary statements are needed. 

%\todo[inline]{porad problem s $c$:}
\begin{lemma}\label{le:pomoc1} 
Let $\delta>1$ be a Pisot unit and  $\mathcal{D}$ denote the alphabet defined in \eqref{eq:Dabeceda}.  Assume that $\beta_1, \beta_2, \ldots, \beta_p \in \mathbb{Q}(\delta)$.   
Then for every $c>0$ there exists $k\in \N$ with the following property: 

If the $\BB$-expansion of $y$ is of the form $y=\sum_{i=-k}^{-1}d_i\delta^i$ with $d_{-1}, d_{-2}, \ldots, d_{-k} \in \mathcal{D}$ and $d_{-k}\neq 0$, then $|\psi(y)|\geq c$ for at least one non-identical embedding $\psi: \mathbb{Q}(\delta) \to \mathbb{C}$.  
\end{lemma}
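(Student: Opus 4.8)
Here is the plan I would follow.

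The plan is to deduce the statement from the finiteness of a single auxiliary set which does not depend on $k$. Fix $c>0$ and call $y\in[0,1)$ \emph{$c$-exceptional} if $y$ has a finite $\BB$-expansion and $|\psi(y)|<c$ for every non-identical embedding $\psi\colon\Q(\delta)\to\C$. I will show that there are only finitely many $c$-exceptional numbers. Granting this, let $L$ be the largest $k$ for which some $c$-exceptional $y$ has $\BB$-expansion of the form $\sum_{i=-k}^{-1}d_i\delta^i$ with $d_i\in\mathcal{D}$ and $d_{-k}\neq0$ (a maximum since the set is finite; set $L=0$ if there are no $c$-exceptional numbers), and put $k:=L+1$. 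If the $\BB$-expansion of $y$ has the form $y=\sum_{i=-k}^{-1}d_i\delta^i$ with $d_i\in\mathcal{D}$ and $d_{-k}\neq0$, then this is a finite $\BB$-expansion whose length exceeds $L$, so $y$ cannot be $c$-exceptional; hence $|\psi(y)|\geq c$ for at least one non-identical $\psi$, which is the assertion of the lemma.

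For the finiteness the crucial input is that $\delta$ is an algebraic \emph{unit}, not merely a Pisot or Salem number. First, since $\beta_1,\dots,\beta_p\in\Q(\delta)$, the finite alphabet $\mathcal{D}$ of \eqref{eq:Dabeceda} is contained in $\Q(\delta)$, so there is a positive integer $q$ with $q\,\mathcal{D}\subseteq\Z[\delta]$. Second, because $\delta$ is a unit, the constant term of its minimal polynomial is $\pm1$, whence $\delta^{-1}\in\Z[\delta]$; hence $\Z[\delta]$ is stable under multiplication by $\delta^{-1}$, and therefore every $y$ with a finite $\BB$-expansion, written in block form $y=\sum_{i=-k}^{-1}d_i\delta^i$ with $d_i\in\mathcal{D}$, satisfies $qy=\sum_{i=-k}^{-1}(qd_i)\delta^i\in\Z[\delta]$ — and this membership holds \emph{uniformly in $k$}. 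Third, with $D=\deg\delta$ and $\psi_1=\mathrm{id},\psi_2,\dots,\psi_D$ the embeddings of $\Q(\delta)$ into $\C$, the map $\alpha\mapsto\bigl(\psi_1(\alpha),\dots,\psi_D(\alpha)\bigr)$ realizes $\Z[\delta]$, hence also $\tfrac{1}{q}\Z[\delta]$, as a discrete subset of $\C^D$ (the standard Minkowski-embedding picture: the ring of integers of a number field, and any full-rank subring of it, is a lattice). Note also that $D\geq2$, since a rational integer greater than $1$ is not a unit, so non-identical embeddings do exist.

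Now, if $y$ is $c$-exceptional then $\psi_1(y)=y\in[0,1)$ is bounded while $|\psi_j(y)|<c$ for $j=2,\dots,D$, so the image of $y$ in $\C^D$ lies in a fixed bounded box; a discrete set meets a bounded box in a finite set, which gives the required finiteness. The only step I expect to need real care is the second one above: one must keep in mind that it is the \emph{unit} property of $\delta$ (rather than merely ``Pisot'' or ``Salem'') that forces all the numbers $y=\sum_{i=-k}^{-1}d_i\delta^i$, of every length $k$, to lie in one and the same lattice $\tfrac{1}{q}\Z[\delta]$; once that is secured, the discreteness and boundedness arguments are routine. A minor point still to check is that a finite $\BB$-expansion whose length is not a multiple of $p$ yields, after padding with trailing zeros, a block representation with $d_{-k}\neq0$, in accordance with the normalisation in the statement.
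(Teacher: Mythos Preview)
Your proposal is correct and follows essentially the same route as the paper's proof. The paper argues by contradiction---assuming an injective sequence $(y^{(k)})_{k\in\N}$ of counterexamples, one for each $k$, and deriving a contradiction with the finiteness of bounded subsets of $\Z[\delta]$---while you argue directly by showing the set of $c$-exceptional numbers is finite and then taking $k$ larger than the maximal length; but the three key ingredients (a common denominator $q$ placing $\mathcal D$ in $\tfrac1q\Z[\delta]$, the unit property giving $\delta^{-1}\in\Z[\delta]$ so that all the $y$'s land in one lattice, and the discreteness of $\Z[\delta]$ under the Minkowski embedding) are identical in both arguments.
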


\begin{proof} Denote by $D$ the degree of the algebraic number $\delta$. The following facts hold true:

\begin{enumerate}
    \item[Fact 1)] Since $\delta$ is an unit, $\delta^{-i}$ belongs to the ring $\Z[\delta] = \{a_0 +a_1\delta +a_2\delta^2 + \cdots + a_{D-1}\delta^{D-1}: a_j \in \Z\}$ for every $i \in \N$.  

    \item[Fact 2)] Since $\beta_1, \beta_2, \ldots, \beta_p \in \mathbb{Q}(\delta)$, the digit set $\mathcal{D}$ is a finite subset of $\mathbb{Q}(\delta)$, and thus there exists $q \in \N$ such that $\mathcal{D} \subset \frac1q\mathbb{Z}[\delta]$. 

    \item[Fact 3)] The set $S_H:=\{x \in \mathbb{Z}[\delta]: |\psi(x)| <H \text{ \ for every embedding  $\psi:\mathbb{Q}(\delta) \to \mathbb{C}$} \}$ is finite  for any $H>0$.  
\end{enumerate}

We prove the statement of the lemma by contradiction.
Assume that there exists a constant $c > 0$ such that  for any $k \in \N$ we can find a number $y^{(k)}$ with the $\BB$-expansion of the form 
$$
y^{(k)}=\sum_{i=-k}^{-1}d_i^{(k)}\delta^i,\qquad d^{(k)}_{-1},d^{(k)}_{-2}, \ldots,d^{(k)}_{-k} \in \mathcal{D},\  d^{(k)}_{-k}\neq 0,
$$ 
such that $|\psi(y^{(k)})| < c$ for every  non-identical embedding $\psi: \mathbb{Q}(\delta) \to \mathbb{C}$.
Note that the $\BB$-expansion of a number is unique, and thus the sequence $\bigl(y^{(k)}\bigr)_{k\in\N}$ is injective. Necessarily, the set $S:=\{qy^{(k)}: k\in \N\}$ is infinite.

On the other hand, by Facts 1) and  2),  we have  $S\subset \mathbb{Z}[\delta]$.  Moreover, the identical embedding satisfies $|\psi(qy^{(k)})| = |qy^{(k)}| \leq  q\mu \sum_{i\geq 1} \delta^{-i} = q\mu \frac{1}{\delta -1} $, where $\mu$ denotes  $\max\{|d| : d \in \mathcal{D}\}$. 
Setting  $H :=\max \{qc,q\mu \frac{1}{\delta -1}\} $, we have $S\subset S_H$ and by Fact 3), $S$ is finite, which is a contradiction.   
\end{proof}

\begin{lemma}\label{le:pomoc2}  
With the notation of Lemma \ref{le:pomoc1} it further holds:

There exists a constant $\kappa>0$ such that for every $z \in \N_{\BB} \setminus \delta\, \mathbb{N}_{\BB}$, we have $|\psi(z)|\geq \kappa$ for at least one non-identical embedding $\psi : \mathbb{Q}(\delta) \to \mathbb{C}$. 
\end{lemma}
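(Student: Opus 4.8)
The plan is to argue by contradiction, following the finiteness argument used for Lemma~\ref{le:pomoc1} but adding a rescaling step to tame the identity embedding. As a preliminary I would record a concrete description of $\N_{\BB}\setminus\delta\N_{\BB}$: grouping the $\BB$-expansion of a $\BB$-integer into blocks of length $p$ and passing to the base $\delta$ as in~\eqref{eq:uprava}, every $z\in\N_{\BB}$ has the form $z=\sum_{i=0}^{m-1}e_i\delta^i$ with $e_i\in\mathcal{D}$; since multiplication by $\delta$ shifts the radix point $p$ places (so $\delta\N_{\BB}\subseteq\N_{\BB}$), one has $z\in\delta\N_{\BB}$ if and only if the lowest $\delta$-digit $e_0$ vanishes, and hence $z\in\N_{\BB}\setminus\delta\N_{\BB}$ exactly when $e_0\in\mathcal{D}\setminus\{0\}$. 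I will also use Facts 1)--3) from the proof of Lemma~\ref{le:pomoc1} (as $\delta$ is a unit, $\Z[\delta]$ is stable under $\delta^{\pm1}$; $\mathcal{D}\subset\tfrac1q\Z[\delta]$ for some $q$; and $S_H$ is finite for every $H>0$), the injectivity of every embedding of $\Q(\delta)$ (so $\psi(z)\neq0$ whenever $z\in\Q(\delta)$ is nonzero), and the fact that $\delta$, being a unit greater than $1$, has degree at least $2$, so that non-identical embeddings exist.

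Now suppose the lemma fails: there is a sequence $(z_n)$ in $\N_{\BB}\setminus\delta\N_{\BB}$ with $|\psi(z_n)|<1/n$ for every non-identical embedding $\psi$. Write $z_n=\sum_{i\ge 0}e_i^{(n)}\delta^i$ (a finite sum, $e_0^{(n)}\in\mathcal{D}\setminus\{0\}$). By a diagonal argument, using that $\mathcal{D}$ is finite, I would pass to a subsequence along which each $\delta$-digit $e_i^{(n)}$ is eventually constant, equal to some $e_i\in\mathcal{D}$; since the values $e_0^{(n)}$ lie in the finite set $\mathcal{D}\setminus\{0\}$, the limit digit $e_0$ is again nonzero. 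Writing $\eta=\psi(\delta)$ (so $|\eta|<1$ because $\delta$ is Pisot), a routine estimate (splitting the tail, using finiteness of $\mathcal{D}$) gives $\psi(z_n)\to\sum_{i\ge 0}\psi(e_i)\eta^i=:\Psi_\psi$, whence $\Psi_\psi=0$ for every non-identical $\psi$. For $L\in\N$ put the partial sum $z^{(L)}:=\sum_{i=0}^{L}e_i\delta^i$. For $n$ large the block string $e_Le_{L-1}\cdots e_0$ is a suffix of the admissible $\BB$-expansion of $z_n$, hence admissible by Theorem~\ref{thm:admis}; so $z^{(L)}\in\N_{\BB}$, and because $e_0\neq0$ in fact $z^{(L)}\in\N_{\BB}\setminus\delta\N_{\BB}$.

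The crux, and the step I expect to be the main obstacle, is that the identity embedding is unbounded on $\N_{\BB}\setminus\delta\N_{\BB}$, so one cannot directly place the integers $z^{(L)}$ into a finite set as in Lemma~\ref{le:pomoc1}; the remedy is to rescale. Put $u_L:=z^{(L)}/\delta^L=\sum_{i=0}^{L}e_i\delta^{i-L}$; then $qu_L\in\Z[\delta]$ by Facts 1)--2). Under the identity embedding, $u_L=e_L+e_{L-1}\delta^{-1}+\cdots+e_0\delta^{-L}$ is a nonnegative sum with at most one digit per power of $\delta$, so $0\le u_L\le\mu\delta/(\delta-1)$ with $\mu=\max_{d\in\mathcal{D}}|d|$; this bounded-integer-part property replaces the trivial bound $y<1$ available in Lemma~\ref{le:pomoc1}. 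Under a non-identical $\psi$, using $\Psi_\psi=0$, $\psi(u_L)=\eta^{-L}\psi(z^{(L)})=\eta^{-L}\bigl(\Psi_\psi-\sum_{i>L}\psi(e_i)\eta^i\bigr)=-\sum_{l\ge1}\psi(e_{L+l})\eta^l$, so $|\psi(u_L)|\le\widetilde{K}|\eta|/(1-|\eta|)$ with $\widetilde{K}=\max_{d\in\mathcal{D}}|\psi(d)|$. Hence all $qu_L$ lie in a single finite set $S_H$, so by the pigeonhole principle $u_{L_1}=u_{L_2}$ for some $L_1<L_2$, giving $z^{(L_2)}=\delta^{L_2-L_1}z^{(L_1)}\in\delta\N_{\BB}$ (as $z^{(L_1)}\in\N_{\BB}$ and $\delta\N_{\BB}\subseteq\N_{\BB}$), which contradicts $z^{(L_2)}\in\N_{\BB}\setminus\delta\N_{\BB}$. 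This contradiction proves the lemma; the degenerate subcase where only finitely many $e_i$ are nonzero is subsumed (then $u_L\to0$, so the $qu_L$ take infinitely many distinct values in the finite set $S_H$), or can be finished at once by observing that $\sum_i e_i\delta^i$ would be a nonzero element of $\Q(\delta)$ all of whose non-identical conjugates vanish.
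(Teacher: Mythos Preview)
Your proof is correct but follows a genuinely different path from the paper's. The paper gives a short constructive argument that invokes Lemma~\ref{le:pomoc1} directly: one first observes that every non-identical conjugate of any $w\in\N_{\BB}$ is bounded by a constant $H=\mu/(1-\Phi)$ (since $|\psi(\delta)|<1$ and digits lie in the finite set $\mathcal{D}$), then applies Lemma~\ref{le:pomoc1} with $c=2H$ to obtain an index $k$; for $z\in\N_{\BB}\setminus\delta\N_{\BB}$ one writes $z/\delta^k=w+y$ with $w\in\N_{\BB}$ the integer part and $y$ the fractional part (which has exactly $k$ $\delta$-digits, the last nonzero), so by Lemma~\ref{le:pomoc1} some $|\psi(y)|\ge 2H$, and the triangle inequality then gives $|\psi(z)|\ge H\cdot|\psi(\delta)|^k=:\kappa$. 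Your argument instead bypasses Lemma~\ref{le:pomoc1} entirely, producing via a diagonal extraction a limiting digit sequence $(e_i)$ whose non-identical $\psi$-images sum to zero, and then rescaling the partial sums $z^{(L)}$ by $\delta^{-L}$ so as to trap infinitely many elements of $\tfrac1q\Z[\delta]$ in a bounded region of all embeddings. The paper's route is shorter and yields an explicit $\kappa$; yours is self-contained and makes the role of the Pisot hypothesis (convergence of $\sum\psi(e_i)\eta^i$) and the unit hypothesis (stability of $\Z[\delta]$ under $\delta^{-1}$) equally transparent, at the cost of the extra bookkeeping around the diagonal limit and the degenerate finite-support case.
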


\begin{proof} In this proof we again abbreviate ``embedding of $\mathbb{Q}(\delta)$ into $\mathbb{C}$"  to ``embedding". Denote 
$$
H:=\max\{|\phi(z)|: z \in \mathbb{Z}_{\BB} \text{ and } \phi \text{ a non-identical embedding}  \}.
$$
We will show that  $H$ is finite. For this purpose, 
denote
\begin{equation}\label{eq:FImi}
\begin{aligned}
\Phi &= \max\{|\psi(\delta)|: \psi \text{ a non-identical embedding} \}, \quad \text{and}\\    
\mu &=\max\{|\psi(d)| :d \in \mathcal{D} \text{ and } \phi \text{ a non-identical embedding}\}.
\end{aligned}
\end{equation}
Since $\delta$ is Pisot and  $1\in \mathcal{D}$,  obviously $\Phi \in (0,1)$ and  $\mu \geq 1$. 

For every $z \in \mathbb{N}_{\BB}$ and for every non-identical embedding  $\phi$ we have the following inequalities 
$$
|\phi(z)| = \Bigl|\phi\Bigl(\sum_{i=0}^k d_i\delta^i\Bigr)\Bigr| \leq \sum_{i=0}^k |\phi(d_i)| \cdot |\phi(\delta)|^i \leq \mu \sum_{i=0}^{+\infty}  \Phi^i = \frac{ \mu}{1-\Phi}, %\ \  \ \text{ and thus } \ 
$$  
Consequently, $H <  \frac{ \mu}{1-\Phi}$.
We now use Lemma~\ref{le:pomoc1} to find the index $k$ for the choice $c= 2H> 0$.  

Let $z \in \mathbb{N}_{\BB} \setminus \delta\, \mathbb{N}_{\BB} $. Then $\frac{z}{\delta^k}$ can be written as $\frac{z}{\delta^k} = w + y$, where $w \in \mathbb{N}_{\BB}$  and $y$ satisfy assumptions of Lemma~\ref{le:pomoc1}. Thus there exists a non-identical embedding $\psi$ such that  $|\psi(y)| \geq 2H$. As $w \in \mathbb{N}_{\BB}$,  from the definition of $H$ we clearly obtain $|\psi(w)|\leq H$. 
Therefore 
$$
|\psi(z)|  = |\psi(y)+ \psi(w)|  \,.|\psi(\delta)|^k\geq   \Bigl(|\psi(y)|- |\psi(w)|\Bigr)\,. |\psi(\delta)|^k\geq H \,.|\psi(\delta)|^k 
$$ 
To conclude the proof, it suffices to set
$\kappa = H \cdot\min\{|\phi(\delta)|^k: \phi \text{ an embedding}  \}$. 
\end{proof}

\begin{remark}\label{re:hodneNul} 
 By~\cite{MPS23-LNCS}, requirement that Property (F) is satisfied forces  $d_{\BB^{(i)}}(1)$ to be finite for every $i =1,2,\ldots, p$. There exists only finitely many factors of the form $a0^\ell b$ with  $a,b \neq 0$, $\ell\in\N$, belonging to the language of the string $d_{\BB^{(i)}}(1)$ for some $i$. Choose $r \in \N$ so that  $\ell < rp$ for every such factor $a0^\ell b$. This choice of $r$ ensures that if both $x_1x_2 \cdots x_{pk} 0^\omega$ and $y_1y_2y_3
\cdots $ are $\BB$-admissible, then also $x_1x_2 \cdots x_{pk} 0^{pr} y_1y_2y_3\cdots $ is $\BB$-admissible. 
\end{remark}

\begin{proof}[Proof of Theorem~C] 
We first show that $\BB=\BB^{(1)}$ satisfies Property (PP).

Let $c>0$ denote the constant from Lemma~\ref{le:pomoc2} and let $r\in \N$ be as chosen in Remark \ref{re:hodneNul}. 
Set $\gamma = \min\{c, \delta^{-r}\}$.

\medskip
Let  $x \in \mathbb{Q}\cap (0,\gamma)$. We will use the fact that there exist infinitely many $N\in \N$  such that  $x(\delta^N - 1) \in \mathbb{Z}[\delta]$, see~\cite{A98}.
Among such exponents, choose $N$ so that  $|\phi(\delta)|\cdot|\phi(\delta)^N-1| <1$, for every  non-identical embedding $\phi$. This is possible, because $\delta$ is a Pisot number, i.e.\ $|\phi(\delta)|<1$.
Denote $z =x(\delta^N - 1)\in \mathbb{Z}[\delta]$.

Since $\BB$ satisfies property (F), the $\BB$-expansion of $z$ is of the form $z=\sum_{j=s}^n d_j\delta^j$, where $s,n \in \mathbb{Z}, s\leq n,  d_s, d_{s+1}, \ldots, d_n \in \mathcal{D}$  and $d_s\neq 0$.

\medskip
We first show using Lemma~\ref{le:pomoc2} that  $z \in \mathbb{N}_{\BB}$, in other words that $s\geq 0$.  
Indeed, as $\delta^{-s}z \in \mathbb{N}_{\BB}\setminus \delta \, \mathbb{N}_{\BB}$,  we can find a  non-identical embedding $\psi$ such that    
$$
\begin{aligned}
c&< |\psi(\delta^{-s}\,z)| = |\psi(\delta)|^{-s} \cdot|x| \cdot|\psi(\delta)^N -1| = \\
&=
|\psi(\delta)|^{-s-1} \,|x| \,|\psi(\delta) |\cdot|\psi(\delta)^N -1| < c  \,|\psi(\delta)|^{-s-1}   
\end{aligned}
$$
Since $|\psi(\delta)| < 1$, the latter inequality implies $-s-1\leq -1$, i.e.\  $s\geq 0$. 
\medskip

Now since $z \in \mathbb{N}_{\BB}$  and $z = x(\delta^N -1) < \delta^{-r} (\delta^N-1) < \delta^{N-r}$, the $\BB$-expansion of $\frac{z}{\delta^N}$ is of the form 
$$
d_{\BB}\Bigl(\frac{z}{\delta^N}\Bigr) = 0^{rp}z_{rp+1}z_{rp+1} \cdots z_{pN}0^\omega \,.
$$
By Remark \ref{re:hodneNul}, the purely periodic 
string $(0^{rp}z_{rp+1}z_{rp+1} \cdots z_{pN})^\omega$ is  $\BB$-admissible.  Moreover, the string represents the number
$$
\frac{z}{\delta^N} + \frac{z}{\delta^{2N}} 
+ \frac{z}{\delta^{3N}} +\cdots =  \frac{z}{\delta^N - 1} =x. 
$$
In order to show that $\BB^{(i)}$ satisfies (PP) for $i\geq 2$, it suffices to recall the result of~\cite{MPS23-LNCS} which states that if $\BB$ satisfies the finiteness property, then so do all of its shifts. 
\end{proof}

%%%%%%%%%%%%%%%%%%%%%%%%%%%%%%%%%%%%%%%%%%%%%%%%%%%%%%%
\section{A class of bases with (PP)}\label{sec:example}

In this section we present a class of alternate bases in which any rational number in the interval $[0,1)$ has purely periodic expansion.  

\begin{proposition}\label{pro:gama1}
Let $\delta>1$ be the positive root of the polynomial $x^2-(m+1)x-1$, $m\geq1$, and let $\beta_1=\frac{\delta}{\delta-1}$, $\beta_2=\delta-1$.   
Then every rational number in the interval $[0,1)$ has purely periodic expansion, i.e.\ $\gamma(\BB)=1$.
\end{proposition}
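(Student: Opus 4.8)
The plan is to verify that $\BB$ satisfies the hypotheses of Theorem~\ref{thm:sufficient} and then, for this explicit family, to run the conjugate-window argument underlying Schmidt's~\cite{S80} one-base result.

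\emph{Setup.} From $\delta^2=(m+1)\delta+1$ with $m\ge1$ one sees that $\delta$ is a quadratic algebraic integer whose conjugate $\delta'$ satisfies $\delta\delta'=-1$, so $\delta'=-1/\delta\in(-1,0)$ and $\delta$ is a Pisot unit; moreover $\delta>2$, since already the root of $x^2-2x-1$ exceeds $2$. Hence $\beta_2=\delta-1>1$, $\beta_1=\delta/(\delta-1)\in(1,2)$, $\beta_1\beta_2=\delta$, $\beta_1,\beta_2\in\Q(\delta)$, and the non-identical embedding $\psi$ sends $\beta_2$ to $\delta'-1\in(-2,-1)$, so the vector $(\psi(\beta_1),\psi(\beta_2))$ is not positive. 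Running the greedy algorithm gives the finite expansions $d_{\BB}(1)=1\,1\,0^\omega$, $d_{\BB}(1/\delta)=0\,1\,0^\omega$ (so $1,\delta\in{\rm fin}(\BB)$) and $d_{\BB^{(2)}}(1)=m\,0\,1\,0^\omega$; in particular $d_{\BB^{(i)}}(1)$ is finite for $i=1,2$, which places $\BB$ in the class of (F)-bases of~\cite{MPS23-LNCS}. Thus $\BB$ has Property~(F) and, $\delta$ being a Pisot unit, Theorem~\ref{thm:sufficient} already yields $\gamma(\BB)>0$; it remains to improve this to $\gamma(\BB)=1$, i.e.\ to show that every $x\in\Q\cap(0,1)$ has a purely periodic $\BB$-expansion.

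\emph{The window argument.} Fix $x=p/q\in\Q\cap(0,1)$. As $\delta$ is a unit it has finite multiplicative order in the finite ring $\Z[\delta]/q\Z[\delta]$; taking $N$ to be a large multiple of that order makes $z:=x(\delta^N-1)\in\Z[\delta]$, with $z<\delta^N$ (because $x<1$) and $\psi(z)=x\bigl((\delta')^N-1\bigr)\in(-1,0)$ for $N$ large (this number tends to $-x$ as $N\to\infty$). By Property~(F) together with $1,\delta\in{\rm fin}(\BB)$ we get $\Z[\delta]\subseteq{\rm fin}(\BB)$, so $z$ has a finite $\BB$-expansion; the crucial claim is that this expansion has \emph{no fractional part}, i.e.\ $z\in\N_{\BB}$. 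Let $W:=\overline{\{\psi(w):w\in\N_{\BB}\}}$ be the conjugate window of the $\BB$-integers. The claim follows once one shows $(-1,0)\subseteq W$: given $z\ge0$, $z\in{\rm fin}(\BB)$ and $\psi(z)\in(-1,0)$, the Pisot structure then forces $z\in\N_{\BB}$ — this is the alternate-base analogue of the classical fact that, for a Pisot unit, the $\delta$-integers are exactly the non-negative elements of $\Z[\delta]$ whose conjugate lies in the window. Granting $z\in\N_{\BB}$, one has $z/\delta^N<1$ with $d_{\BB}(z/\delta^N)=z_1z_2\cdots z_{pN}0^\omega$, and since $\tfrac{z}{\delta^N}+\tfrac{z}{\delta^{2N}}+\cdots=\tfrac{z}{\delta^N-1}=x$, the purely periodic word $(z_1\cdots z_{pN})^\omega$ represents $x$. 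Finally, one checks via Theorem~\ref{thm:admis} that this word is $\BB$-admissible: every adjacency other than the wrap-around $z_{pN}z_1$ is inherited from the admissibility of $z_1\cdots z_{pN}0^\omega$, and the wrap-around is controlled using the explicit quasigreedy words, which for this family are $d_{\BB^{(1)}}^*(1)=(10)^\omega$ and $d_{\BB^{(2)}}^*(1)=m\,0\,0\,(10)^\omega$. Hence $d_{\BB}(x)=(z_1\cdots z_{pN})^\omega$ is purely periodic; since $x$ was arbitrary and $\gamma\le1$ always, $\gamma(\BB)=1$.

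\emph{Main obstacle.} Everything hinges on the inclusion $(-1,0)\subseteq W$, equivalently on the statement ``every non-negative element of $\Z[\delta]$ with conjugate in $(-1,0)$ is a $\BB$-integer'' — this is precisely the place where Theorem~\ref{thm:sufficient} succeeds only for $x$ below a constant, and extending to the whole unit interval demands an effective handle on $W$. Carrying this out means describing $W$ (the Rauzy-type fractal of this numeration system) explicitly from the alphabet $\mathcal D=\{a_1(\delta-1)+a_2:a_1\in\{0,1\},\,0\le a_2\le m\}$, the Pisot datum $\delta'=-1/\delta$, and the admissibility conditions above, and checking that it contains the interval $(-1,0)$. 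The remaining point — admissibility of the wrapped word, where the zero buffer of Remark~\ref{re:hodneNul} used in Theorem~\ref{thm:sufficient} is no longer available — is a finite, explicit verification that I expect to go through routinely by comparison with $d_{\BB^{(1)}}^*(1)$ and $d_{\BB^{(2)}}^*(1)$.
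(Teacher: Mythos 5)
Your proposal correctly reduces the statement to a geometric claim, but it does not prove that claim, and the claim is precisely the nontrivial content of the proposition. You reduce the problem to showing $(-1,0)\subseteq W$ (where $W$ is the closure of the conjugates of $\N_{\BB}$) and, more sharply, to the statement that any $z\in{\rm fin}(\BB)$ with $z\ge 0$ and $\psi(z)\in(-1,0)$ lies in $\N_{\BB}$. Neither is established: you explicitly flag the first as the ``main obstacle'' and leave it to a (hypothetical) description of the Rauzy-type set attached to the alphabet $\mathcal D$, and you pass from ``closure contains the interval'' to ``every such $z$ is a $\BB$-integer'' without justification. Note that for the alternate base, $\N_{\BB}$ is in general a proper subset of the $\delta$-integers, so the classical one-base window computation does not carry over automatically; establishing $(-1,0)\subseteq W$ for this family is genuinely new work that the proposal does not do.

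A second, independent gap is the wrap-around admissibility. You assert that passing from the finite admissible word $z_1\cdots z_{pN}0^\omega$ to the periodic word $(z_1\cdots z_{pN})^\omega$ is a ``routine finite verification,'' but this is exactly the place where the buffer of zeros from Remark~\ref{re:hodneNul} was needed in the proof of Theorem~\ref{thm:sufficient}, and it is not available here. Without control of the suffix $z_{pN-1}z_{pN}$ the cyclic shift can create forbidden factors (e.g., against $d^*_{\BB}(1)=(10)^\omega$ a trailing $10$ immediately followed by a leading $1$ is forbidden). The paper's actual argument is quite different and sidesteps both issues: it invokes Schmidt's classical result that the Rényi $\delta$-expansion of every rational in $(0,1)$ is already purely periodic, uses a conjugate estimate to show the last Rényi digit is $0$, and then transforms the periodic $\delta$-word into an admissible periodic $\BB$-word by an explicit, value-preserving local rewriting (rules~\eqref{eq:A}--\eqref{eq:C}), tracking the last block to guarantee the cyclic concatenation stays admissible. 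Your strategy could in principle be made to work, but as written it assumes the two hard points rather than proving them.
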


It can be easily computed that $\floor{\beta_1}=1$, $\floor{\beta_2}=m$ and
\begin{equation}\label{eq:rozvojejednicekpriklad}
\begin{aligned}
d_{\BB}(1) &= 11,\\ 
d_{\BB^{(2)}}(1)&=m01,    
\end{aligned}\qquad\text{which implies}\qquad
\begin{aligned}
d_{\BB}^*(1) &= (10)^\omega,\\ 
d_{\BB^{(2)}}^*(1)&=m0(01)^\omega.      
\end{aligned}
\end{equation}
Note that for $m=2$, we obtain $\BB=(\frac{5+\sqrt{13}}{6},\frac{1+\sqrt{13}}{2})$, which is a shift of the example well studied in~\cite{CC21}.

\begin{proof}
The number $\delta$ is a quadratic Pisot unit of the class studied in~\cite{S80} for which Schmidt proved that the constant $\gamma$ from the (PP) property is equal to 1.
Thus every rational number $r\in(0,1)$ has a purely periodic expansion in the Rényi numeration system with base $\delta$, i.e.
$d_{\delta}(r)=(d_1d_2\cdots d_n)^\omega$ where, by the Parry condition~\cite{P60}, for each $i\in\N$, 
$$
d_{i+1}d_{i+2}d_{i+3}\cdots \prec_{\text lex} ((m+1)0)^{\omega} = d_{\delta}^*(1).
$$
In particular, the digits $d_i$ belong to $\{0,1,\dots,m+1\}$, and each digit $d_i=(m+1)$ is followed by the digit $d_{i+1}=0$.
We thus have
\begin{equation}\label{eq:r}
r=\big(\frac{d_1}{\delta} + \frac{d_2}{\delta^2}+\cdots +\frac{d_n}{\delta^n}\big)\frac{\delta^n}{\delta^n-1}
=\big(\underbrace{{d_1}{\delta}^{n-1} + {d_2}\delta^{n-2} + \cdots +{d_n}}_{=:z}\big)\frac{1}{\delta^n-1}.  
\end{equation}

We now justify that the last digit in the period is
$d_n=0$. Indeed, 
    from \eqref{eq:r} we derive that 
    $$
    r(\delta^n-1)=\sum_{k=1}^{n-1}d_k\delta^{n-k} + d_n.
    $$
Apply the non-identical automorphism $\psi$ of the field $\Q(\delta)$, which is induced by $\psi(\delta)=\delta'=-\frac1{\delta}$. We obtain
$$
\begin{aligned}  
0 &>r((\psi(\delta))^n-1) = \sum_{k=1}^{n-1}d_k(\psi(\delta))^{n-k} + d_n >\\ 
&>d_n - (m+1)\sum_{i=0}^{+\infty}\frac{1}{\delta^{2i+1}}  = d_n - \frac{m+1}{\delta}\frac{1}{1-\frac1{\delta^2}} = d_n-1.
\end{aligned}
$$
This gives that $d_n<1$, whence $d_n=0$.

Let us now inspect the expansion of the rational number $r$ in base $\BB$. In~\eqref{eq:uprava}, we have shown that a representation of a number $x$ in an alternate base $\BB$ can be viewed as a representation of $x$ in a single base $\delta$ with digits in the alphabet ${\mathcal D}$. In our case, ${\mathcal D}=\{a\beta_2+b:a,b\in\N,a\leq 1, b\leq m\}$.
Replacing the digits $a\beta_2+b\in {\mathcal D}$ by the pair
$\boxed{ab}$, one converts the representation of a number $x$ in base $\delta$ into a representation of $x$ in the alternate base $\BB$. The Rényi expansion~\eqref{eq:r} of $r$ is purely periodic with the period being the number $z={d_1}{\delta}^{n-1} + {d_2}\delta^{n-2} + \cdots +{d_n}$. We thus have a $\BB$-representation of $r$
\begin{equation}\label{eq:rboxed}
    r=\underbrace{\begin{array}{|c|c|c|c|c|} \hline  0d_1&0d_2& \cdots& 0d_{n-1}&00\\ \hline\end{array}}_{z} \
    \underbrace{\begin{array}{|c|c|c|c|c|} \hline  0d_1&0d_2& \cdots& 0d_{n-1}&00\\ \hline\end{array}}_{z} \ \cdots
\end{equation}

Note that this $\BB$-representation of $r$ need not be the $\BB$-expansion of $r$, for: (i) it may contain blocks $\boxed{0(m\!\!+\!\! 1)}$ which do not represent digits of the alphabet ${\mathcal D}$, (ii) the order of blocks $\boxed{ab}$ does not respect the condition of admissibility of $\BB$-expansions (Theorem~\ref{thm:admis}) given by~\eqref{eq:rozvojejednicekpriklad}.

We will define an algorithm which rewrites the representation~\eqref{eq:rboxed} into an admissible $\BB$-expansion, by dealing with problems (i) and (ii). We will use
 simple relations which hold for $\beta_2$ and $\delta$ and can be derived straightforwardly from their definition, namely
\begin{eqnarray}
    m+1 &=&\beta_2 + \frac{\beta_2}{\delta}   \label{eq:A},\\
    m+ \frac{1}{\delta} &=& \beta_2 \label{eq:B}\\
       m+ \frac{\beta_2}{\delta}+ \frac{\beta_2}{\delta^2} &=& \beta_2 + \frac{m}{\delta} \label{eq:C}.
\end{eqnarray}

The algorithm is designed as follows:

\begin{itemize}
\item[(i)] Since $d_i=(m\!+\! 1)$ implies $d_{i+1}=0$, each block $\begin{array}{|c|}
\hline
    0(m\!+\! 1)\!   
\\ \hline
\end{array}$ appears in the sequence~\eqref{eq:rboxed} in the pair $\begin{array}{|c|c|}
\hline
    0(m\!+\! 1)\!&00  
\\ \hline
\end{array}$. Rewrite 
$$
\begin{array}{|c|c|}
\hline
    0(m\!+\! 1)\!&00  
\\ \hline
\end{array}  \mapsto \begin{array}{|c|c|}
\hline
   10 & 10
\\ \hline
\end{array}
$$
The two strings represent the same value, thanks to~\eqref{eq:A}.
Use this rewriting rule for the string $z=\begin{array}{|c|c|c|c|c|} \hline  0d_1&0d_2& \cdots& 0d_{n-1}&00\\ \hline\end{array}$ of~\eqref{eq:rboxed} until we have a representation of $z$ as a string of blocks $\boxed{0b}$, $b\in\{0,\dots,m\}$, and blocks $\boxed{10}\boxed{10}$. Moreover, the  string of blocks representing $z$ ends either in $\boxed{00}$ or in $\boxed{10}\boxed{10}$. We will say that it is reduced.

\item[(ii)]
From the condition of admissibility of $\BB$-expansions (Theorem~\ref{thm:admis}) and the expansions of 1 in the considered base~\eqref{eq:rozvojejednicekpriklad}, we derive that the reduced sequence of blocks obtained in step (i) may contain non-admissible substrings only of the form 
 $\begin{array}{|c|c|} \hline  0m&0b\\ \hline\end{array}$, $1\leq b\leq m$ (say Type A) or $\begin{array}{|c|c|c|} \hline  0m&10&10\\ \hline\end{array}$ (Type B).
 For them, we can use the following rewriting rules:
\begin{equation}\label{eq:rulefortypeA}
\begin{array}{|c|c|} \hline  0m&0b\\ \hline\end{array} \mapsto \begin{array}{|c|c|} \hline  10&00(b\!\!-\!\!1)\\ \hline\end{array}    
\end{equation}
and
\begin{equation}\label{eq:rulefortypeB}
\boxed{0m}\boxed{10}\boxed{10} \mapsto \boxed{10}\boxed{0m}\boxed{00}.
\end{equation}
One can verify using~\eqref{eq:B} and~\eqref{eq:C} that  these rewriting rules preserve the value of the string.

We proceed from left to right by induction on the position of the left-most occurrence of a forbidden substring. In the reduced sequence of blocks we find the left-most occurrence of a forbidden string of Type A or Type B. Using the rewriting rules~\eqref{eq:rulefortypeA} or~\eqref{eq:rulefortypeB}, respectively, we obtain again a reduced sequence of blocks whose left-most occurrence of a forbidden string is smaller by two blocks. 

After finitely many steps, we obtain a reduced sequence of blocks which does not contain any forbidden strings, i.e.\ is admissible in base $\BB$.

At last, realize that step (ii) produces a $\BB$-admissible sequence of blocks representing the number $z$ which ends with the block $\boxed{00}$, or the block $\boxed{10}$. 
In order to obtain the $\BB$-expansion of $r$, we need to concatenate the string for $z$ infinitely many times. The condition on the ending block of $z$ ensures that such a concatenation remains $\BB$-admissible.
\end{itemize}

\end{proof}

\begin{example}
Consider the expansion of the number $r=3/4$ in base $\BB=(\frac{5+\sqrt{13}}{6},\frac{1+\sqrt{13}}{2})$, which corresponds to the above class with $m=2$.
 We first find the R\'enyi $\delta$-expansion of $3/4$ for $\delta=\frac{5+\sqrt{13}}{6}\cdot\frac{1+\sqrt{13}}{2}=\frac12(3+\sqrt{13})$ with minimal polynomial $x^2-3x-1$.

 By the standard greedy algorithm for expansion of a number in one real base $\delta$, we obtain $d_\delta(3/4)=(211230)^\omega$. 
 This gives a $\BB$-representation of $3/4$ written in blocks
$$
\big(\,\boxed{02}\boxed{01}\boxed{01}\boxed{02}\boxed{03}\boxed{00}\,\big)^\omega
$$
From that, by rewriting rule
$$
\begin{aligned}
\boxed{03}\boxed{00} &\mapsto \boxed{10}\boxed{10},\\
\boxed{02}\boxed{01} &\mapsto \boxed{10}\boxed{00},\\
\boxed{02}\boxed{10}\boxed{10} &\mapsto \boxed{10}\boxed{02}\boxed{00}.
\end{aligned}
$$
we will derive the $\BB$-expansion of $3/4$. In the first step, we use the rule $\boxed{03}\boxed{00} \mapsto \boxed{10}\boxed{10}$ to get a $\BB$-representation in the admissible alphabet ${\mathcal D}$. We have
$$
\begin{gathered}
\big(\,\boxed{02}\boxed{01}\boxed{01}\boxed{02}\boxed{03}\boxed{00}\,\big)^\omega\\
\downarrow\\
\big(\,\boxed{02}\boxed{01}\boxed{01}\boxed{02}\boxed{10}\boxed{10}\,\big)^\omega\\    
\end{gathered}
$$
Next, we use the rules $\boxed{02}\boxed{01} \mapsto \boxed{10}\boxed{00}$,
$\boxed{02}\boxed{10}\boxed{10} \mapsto \boxed{10}\boxed{02}\boxed{00}$ to get rid of forbidden strings of digits. We proceed from left to right,
$$
\begin{gathered}
\big(\,\boxed{02}\boxed{01}\boxed{01}\boxed{02}\boxed{10}\boxed{10}\,\big)^\omega\\
\downarrow\\
\big(\,\boxed{10}\boxed{00}\boxed{01}\boxed{02}\boxed{10}\boxed{10}\,\big)^\omega\\    
\downarrow\\
\big(\,\boxed{10}\boxed{00}\boxed{01}\boxed{10}\boxed{02}\boxed{00}\,\big)^\omega\\    
\end{gathered}
$$
Finally, we obtain
$$
d_{\BB}(3/4) = (100001100200)^\omega.
$$
\end{example}

\begin{remark} \label{rem:comments}
Consider the shift $\BB^{(2)}$ of the base $\BB$ of Proposition~\ref{pro:gama1}. In particular, 
let $\delta>1$ be the positive root of the polynomial $x^2-(m+1)x-1$, $m\geq1$ and let $\BB =(\beta_2, \beta_1)$, where $\beta_2=\delta-1$ and $\beta_1=\frac{\delta}{\delta-1}$. By Theorem~\ref{thm:sufficient}, we know that $\BB^{(2)}$ has also Property (PP). Let us make an estimate of the constant $\gamma(\BB^{(2)})$. 
One can easily verify the following: \\
If $m$ is even, i.e.\ $m=2k$, for $k\geq 1$, then 
$$
d_{\BB^{(2)}}(\tfrac12) = k\bigl(000k0(k\!+\!1)\bigr)^\omega.
$$
\medskip
If $m$ is odd, i.e.\ $m=2k+1$, for $k\geq 0$, then  
$$
d_{\BB^{(2)}}(\tfrac12) = k\bigl(0(k\!+\!1)\bigr)^\omega.
$$
In both cases, $\tfrac12$ does not have a purely periodic $\BB^{(2)}$-expansion. Thus $\gamma(\BB^{(2)})\leq \frac12$. Our computer experiments suggest that the bound is not reached. For $m=2$ we estimate $\gamma(\BB^{(2)}) \approx 0.41$. 
\end{remark}

%...
%Testy ukazují .. 

%Stejne jako u kladné jedné báze se zdá, že (F) je i nutná podmínka. Nenasli jsme jediny priklad, kdy by bylo (PP) bez (F). Nicméně pozor na to, pri studiu záporných bazí se ukazuje, že tam (F) nutná není, viz~\cite{MaPe}.  

%%%%%%%%%%%%%%%%%%%%%%%%%%%%%%%%%%%%%%%%%%%%%%%%%%%%%%
\section{Comments}

Theorem~\ref{thm:SchmidtLepsi} states that every rational has an eventually periodic expansion in base $\BB=(\beta_1,\dots,\beta_p)$ only if the product $\delta=\prod_{i=1}^p\beta_i$ is a Pisot or a Salem number and $\beta_i\in\Q(\delta)$ for every $i$. A partial converse of this statement holds, as shown in~\cite{CCK23}, (cf.~Theorem~\ref{thm:PKreczman}). 
It seems to be a difficult question to decide whether an analogue of Item (2) of Theorem~\ref{thm:PKreczman} can be shown for Salem $\delta$.

Let us mention that the problem of Salem numbers appears already in case that $p=1$, i.e.\ in case of Rényi $\beta$-expansions. The question of periodicity in non-Pisot bases can be circumvented by considering representations in base $\beta$ without requiring that they come out from the greedy algorithm. Such idea was first presented in~\cite{BMPV17}, where the authors show that any base  $\beta$ which is either a rational or an algebraic integer without conjugates on the unit circle admits a finite alphabet ${\mathcal A}$ of digits so that any element in the field $\Q(\beta)$ has a periodic $(\beta,{\mathcal A})$-representation. Vávra in~\cite{Vavra21} then extends this result to all algebraic bases, including Salem numbers.

%%%%%%%%%%%%%%%%%%%%%%%%%%%%%%%%%%%%%%%%%%%%%%%%%%%%%%
%\section*{Acknowledgements}

%\begin{ack}
%The research was supported by the European Union within the Project CZ.02.1.01/0.0/0.0/16\_019/0000778.
%\end{ack}

%%%%%%%%%%%%%%%%%%%%%%%%%%%%%%%%%%%%%%%%%%%%%%%%%%%%%%%%%%%%%%%%%%%%%%%%%%%%%%%%
\bibliographystyle{siam}
\bibliography{references}

%%%%%%%%%%%%%%%%%%%%%%%%%%%%%%%%%%%%%%%%%%%%%%%%%%%%%%%%
\end{document}